\documentclass[reqno]{amsart}
\usepackage{amsmath,amsfonts,amssymb,amsthm,dsfont, bm}
\usepackage{float}
\usepackage{enumitem}
\usepackage[usenames,dvipsnames]{xcolor}
\usepackage[colorlinks=true]{hyperref} 
\usepackage{verbatim}
\usepackage{graphicx}
\usepackage{overpic}
\usepackage{mathabx}
\usepackage{mathrsfs}
\usepackage{multirow}
\hypersetup{linkcolor=Violet,citecolor=PineGreen}
\newtheorem{thm}{Theorem}[section]
\newtheorem{lem}[thm]{Lemma}
\newtheorem{prop}[thm]{Proposition}
\newtheorem{cor}[thm]{Corollary}
\theoremstyle{definition}
\newtheorem{defn}[thm]{Definition}
\newtheorem{example}[thm]{Example}
\newtheorem{rmk}[thm]{Remark}
\numberwithin{equation}{section}

\newcommand{\N}{\mathbb{N}}
\newcommand{\R}{\mathbb{R}}

\newcommand{\Z}{\mathbb{Z}}

\newcommand{\A}{\mathcal{A}}
\newcommand{\C}{\mathcal{C}}
\newcommand{\K}{\mathcal{K}}
\newcommand{\M}{\mathscr{M}}

\newcommand{\U}{\mathcal{U}}

\newcommand{\ep}{\varepsilon}
\newcommand\note[1]{\marginpar{\flushleft\sffamily\tiny #1}}

\newcommand{\nin}{{n\in\N}}
\newcommand{\nti}{{n\to\infty}}

\newcommand{\dist}{\mathrm{dist}}

\makeatletter
\newcommand{\setword}[2]{%
  \phantomsection
  #1\def\@currentlabel{\unexpanded{#1}}\label{#2}%
}
\makeatother

\begin{document}
\title[Invariant Sets of  Differential Inclusions -- Boundary Systems]{Invariant Sets and Boundary Systems of Nonautonomous Differential Inclusions}

\author[K.~Kourliouros]{Konstantinos Kourliouros}
\author[I.P.~Longo]{Iacopo P.~Longo}
\author[M.~Rasmussen]{Martin Rasmussen}
\email[Konstantinos Kourliouros]{k.kourliouros10@imperial.ac.uk}
\email[Iacopo P. Longo]{i.longo@exeter.ac.uk}
\email[Martin Rasmussen]{m.rasmussen@imperial.ac.uk}
\address[K.~Kourliouros]{Imperial College London,
Department of Mathematics, 635 Huxley Building, 180 Queen’s Gate
South Kensington Campus, SW7 2AZ London, United Kingdom.}
\address[I.P.~Longo]{University of Exeter,
Department of Mathematics and Statistics, 925 Laver Building, 23 N Park Road, EX4 4QE Exeter, United Kingdom.}
\address[M.~Rasmussen]{Imperial College London,
Department of Mathematics, 637 Huxley Building, 180 Queen’s Gate
South Kensington Campus, SW7 2AZ London, United Kingdom.}


\subjclass[2020]{34A60 	37C60 37C70 37N35}
\date{}
\begin{abstract}
In this paper we propose a finite-dimensional and deterministic approach to the study of invariant sets of certain nonautonomous differential inclusions naturally arising in the context of random and control dynamical systems, as well as in systems modeling the dynamical propagation of uncertainty. In particular, to any such differential inclusion, we associate a finite-dimensional and deterministic system of nonautonomous ordinary differential equations, which we call the boundary system, due to its following characteristic property: invariant sets of the differential inclusion lift in a unique way to backward invariant unit normal cones of the associated boundary system, and these are even invariant if the boundary is smooth. We further illustrate the strength of this approach in the study of minimal attractors of nonautonomous linear differential inclusions. Under the natural assumption of  exponential stability for the unperturbed problem, we establish existence and uniqueness of a minimal attractor for the differential inclusion with fiberwise strictly convex, closed, and continuously differentiable boundaries. We also show that the unit normal bundle is in fact the pullback attractor for the skew-product flow associated to the boundary system which extends to the global attractor when the underlying admits a compact base.    

\end{abstract}
\keywords{Nonautonomous dynamical systems, differential inclusions, invariant sets, attractors, boundary differential equation, set-valued dynamical systems, bounded noise, bounded controls, Pontryagin Maximum Principle}
\maketitle
\section{Introduction}

In this paper we study (minimal) invariant sets of nonautonomous differential inclusions of the form:
\begin{equation}
    \label{eq:diff-incl}
    \dot{x}\in \overline{B_{\rho}}(f(t,x)),
\end{equation}
where $x$ represents the state of the system, $t$ time, $f:\mathbb{R}\times \mathbb{R}^d\rightarrow \mathbb{R}^d$ is a sufficiently smooth function in the $d$-dimensional Euclidean space $\mathbb{R}^d$, with $d\geq 2$, and $\overline{B_{\rho}}(y)$ represents the closed ball of radius $\rho >0$ centered at the point $y\in \mathbb{R}^d$. 
An invariant set for the differential inclusion \eqref{eq:diff-incl} is a fiber-wise compact non-autonomous set $\M=(M_t)_{t\in \mathbb{R}}$ (i.e.~each $M_t$ is compact in $\mathbb{R}^d$) which is invariant under the two-parameter set-valued evolution operator $\Phi^{t,t_0}$ induced by \eqref{eq:diff-incl}, i.e.~the equality $\Phi^{t,t_0}(M_{t_0})=M_t$ holds for all $t,t_0\in \mathbb{R}$ with $t> t_0$. It is called minimal invariant, if there is no other proper subset of $\M$ which is also invariant by $\Phi^{t,t_0}$.

Differential inclusions of the form \eqref{eq:diff-incl} and their invariant sets $\M$, are fundamental objects of study in various contexts within the theory of dynamical systems and its applications, particularly in control and viability theory (cf.~\cite{aubin1984differential, colonius2012dynamics,lee1986foundations,pontryagin1962optimal}), as well as in the theory of random dynamical systems and the dynamical propagation of uncertainty (cf.~\cite{arnold1998rds,banks2014uncertainty,catuogno2025minimal,chappelle2023rate,cui2017uniform,li2024pullback}) among others. In all these contexts, the set-valued evolution operator $\Phi^{t,t_0}$ induced by the differential inclusion \eqref{eq:diff-incl} commonly models---in terms of so-called reachable sets---the compound evolution of all possible trajectories of the family of differential equations of the form
\begin{equation}
\label{eq:01}
    \dot x = f(t,x) + \rho\xi(t),\quad t\in\R,\, x\in\R^d,
\end{equation}
where the time-dependent random (or deterministic) functions $\xi:\R\to\R^d$  belong to the set $\U=\{\xi\in L^\infty(\R,\R^d)\mid \|\xi\|_\infty\le1\}$ of all noise perturbations or admissible controls.

The (minimal) invariant sets $\M$ of  $\Phi^{t,t_0}$ play a central role in the dynamics of reachable sets, as they commonly represent those regions of the extended phase space (trapping regions) from which trajectories of the differential equation \eqref{eq:01} cannot escape, for any noise or uncertainty realisation (or choice of control, depending on the context) $\xi\in\mathcal{U}$, and for any initial condition from $\M$. In control theory, minimal invariant sets are known as invariant control sets \cite{colonius2012dynamics}, and also as viable sets in viability theory \cite{aubin1984differential}, whereas they appear under possibly different names in various other contexts. 



Several prominent studies have addressed fundamental questions concerning the topological properties of minimal invariant sets of set-valued flows (mostly in the autonomous case), as well as their stability and bifurcations (so-called topological, or hard bifurcations) under parameter variation (c.f.~\cite{gayer2004control1,gayer2005controllability,homburgyoung2010bifrdssurfaces,homburgyounggheraei2013bifrds,lambrasmussen2015topbif,younghomburg2006hard}). While these studies provide a significant advancement in the field and its related areas, they heavily rely on topological (infinite-dimensional) set-valued techniques, and are thus bound by the lack of a Banach space structure on the hyperspace $\mathcal{K}(\mathbb{R}^d)$ of all nonempty compact subsets of the phase space $\mathbb{R}^d$. Questions concerning the subtler differentiable properties of invariant sets, such as the regularity or singularities of their boundaries $\partial \M$, their smooth stability, or the sudden changes in their differentiable structure under parameter variation (smooth bifurcations), remain out of reach inside this set-valued framework, a well known obstacle for the development of adequate theoretical and numerical methods alike \cite{colonius2012dynamics}.

Recent developments in the theory of autonomous set-valued discrete dynamical systems ~\cite{kourliouros2023persistence,lamb2023attractors,lamb2025bifurcations,Tey2022minimal} have provided an effective approach in order to overcome this challenge, by introducing a finite dimensional and deterministic map, the so-called boundary map, defined on the unit (co)tangent bundle $T_1\mathbb{R}^d=\mathbb{R}^d\times \mathbb{S}^{d-1}$  of the phase space $\mathbb{R}^d$, which has the following characteristic property: smooth boundaries $\partial M$ of invariant sets $M$ of the set-valued map, lift in a unique way to outward unit (co)normal bundles $N_1^+\partial M$ which are invariant by the boundary map, and conversely. Using this deterministic map, the authors in the above references gained access to the differentiable properties of the boundaries of minimal invariant sets, and proved their smooth stability under small perturbations, indicating also several bifurcation scenaria in prototypical numerical examples.  

The present work establishes an analogous construction within the broader context of continuous-time set-valued dynamical systems arising from differential inclusion of the form \eqref{eq:diff-incl}. This general framework includes autonomous systems---where $f$ is time-independent, i.e.~$f(t,x)=f(x)$---as a special case. Instead of the boundary map of the discrete-time case, we obtain a boundary system of nonautonomous differential equations (see Proposition \ref{prop:boundary-trajectories-PMP}) on $T_1\mathbb{R}^d=\mathbb{R}^d\times \mathbb{S}^{d-1}$ of the form:   
\begin{equation}\label{eq:boundary-system-0}
     \begin{cases}
    \dot x=f(t,x)+\rho\,n,\\
    \dot n=-D_xf(t,x)^\top n+\langle n,D_xf(t,x)^\top n\rangle\, n.
\end{cases}
     \end{equation}  

The techniques for the analysis and derivation of the boundary system \eqref{eq:boundary-system-0}, and its correspondence to the set-valued system are more difficult than in the autonomous discrete-time case since in the discrete case the noise acts at the level of the trajectories whereas in continuous time it is a perturbation of the derivatives. 
Our results are therefore a nontrivial extension of the boundary map approach to the continuous time domain and are anchored to the Pontryagin Maximum Principle, a core result from optimal control theory (cf.~\cite{bressan2007introduction}, \cite{lee1986foundations}, \cite{pontryagin1962optimal}). Indeed, the experienced reader will immediately recognise that system \eqref{eq:boundary-system-0} is nothing but the normalisation on the unit sphere bundle of the Pontryagin--Hamilton system associated to the optimal control problem to minimise the distance  function to the boundary of the reachable set.  Łojasiewicz~\cite{lojasiewicz1980sufficiency} already uses the Pontryagin–Hamilton boundary system to parametrise and characterise the boundary of reachable sets on a finite interval whose length is fixed by a Riccati comparison. On that interval the reachable sets stay uniformly convex and the boundary remains continuously differentiable, so the costate is the outer normal and PMP extremals are exactly boundary trajectories. Beyond this interval the same argument does not, in general, extend, as the constants in \cite{lojasiewicz1980sufficiency} are sharp.   Our approach, instead, focuses on invariant sets, i.e.~time-dependent sets defined on the whole real line and we also consider the case in which the boundary is not necessarily smooth---a common scenario for nonlinear systems.

Using \eqref{eq:boundary-system-0} and a combination of control theoretic and nonautonomous dynamical systems techniques, we establish in Section \ref{sec:boundary-mapping-continuous} the first main result of the paper (Theorem \ref{thm:BM-NAODE-inv}), which concerns the correspondence principle between the boundary of an invariant set $\M$ of the differential inclusion \eqref{eq:diff-incl}, and invariant unit normal bundles (in fact invariant bundle of normalised Murdokhovich cones) $N_1^+\partial \M$ of the boundary system. This result proves and generalises a conjecture in \cite{Tey2022minimal}. Namely, we prove that the bundle of Murhokovich cones $N_1^+\partial \M$ is backward invariant with respect to the boundary system \eqref{eq:boundary-system-0} (which we will call the boundary flow), provided that $\M$ is invariant with respect to the set-valued evolution operator $\Phi^{t,t_0}$ of the differential inclusion \eqref{eq:diff-incl}. Moreover, in the case where the boundary $\partial \M$ of the invariant set $\M$ is the union of continuously differentiable ($C^1$-smooth) manifolds, we show that this correspondence elevates to the complete invariance (i.e.~both backward and forward) of the unit normal bundle $N_1^+\partial \M$ of the boundary, in accordance to the discrete case \cite{kourliouros2023persistence}. 

We strengthen our results in the context of nonautonomous linear differntial inclusions of the form \eqref{eq:diff-incl},  with $f(t,x)=L(t)x$, where $ L:\R\to\R^{d\times d}$ is a locally integrable function. Under the assumption of exponential stability of the linear differential equation $\dot x=L(t)x$, we show that a minimal attractor $\A$ for the set-valued system induced by the differential inclusion always exists (Theorem \ref{thm:exist-attractor}) and characterise many of its fiber-wise geometric and topological properties, such as, symmetry, strict convexity and closedness. Then, using the associated boundary system \eqref{eq:boundary-system-0}, we show the fiber-wise $C^1$-regularity of the boundary $\partial \A$ of the attractor (Theorem \ref{thm:linear-boundary-C1}). In equivalent terms, it means that the solution of the Hamilton-Jacobi-Bellman partial differential equation associated to the boundary system \eqref{eq:boundary-system-0}, is in fact a classical solution and not a viscosity one, an important property which, to our knowledge, has never been considered before in the existing literature. 
As a final result, we show that the outward unit normal bundle $N_1^+\partial \A$ of the boundary of the attractor $\A$ is a global (pullback and forward) attractor for the skew-product flow associated to the boundary system (Theorem \ref{thm:attractivity-UNB}), allowing a rigorous and reliable approximation of the boundary via numerical simulations as we show in an indicative example.


Before we close this section we would like to remark that there is a rich geometry underlying the boundary system \eqref{eq:boundary-system-0} which can be naturally interpreted in terms of the contact structure of the unit (co)tangent bundle $T_1\mathbb{R}^d$, as in the discrete case \cite{kourliouros2023persistence}. While it is not explicitly used in the results of the present paper, it is anticipated that further progress in the differentiable theory of minimal invariant sets (e.g. smooth stability, singularities and bifurcations) will greatly benefit, if not heavily rely, on the properties of the contact structure and the associated Legendrian manifold (and singularity) theory.  


\section{Notation and basic facts on set-valued dynamical systems}
In this brief section, we introduce the notation together with several fundamental notions and results concerning non-autonomous systems and the normal cone, in order to avoid disrupting the flow of the exposition later in the paper. Readers who wish to engage immediately with the principal questions addressed in this article may proceed directly to Section \ref{sec:boundary-mapping-continuous} and return to this section as necessary.

\subsection{Notation}
The usual notation for the sets of natural ($\N$), integer ($\Z$) and real numbers ($\R$) will be used. Analogously, $\R^d$, with $d\in\N$, will denote the $d$-dimensional Euclidean space with the usual norm $|x|$ for $x\in\R^d$ and scalar product $\langle x, y\rangle$ for $x,y\in\R^d$. 
Given a set $M\in\R^d$, $\overline M$ will denote the closure of $M$ with respect to the topology induced by the Euclidean norm.
The symbol $B_\rho(x)$ denotes the open ball in $\R^d$ of radius $\rho>0$ and centered at $x\in\R^d$. 
Moreover, for any set $M\subset\R^d$, the symbol $B_\rho(M)$ identifies the set $\{y\in\R^d\mid y\in B_\rho(x) \text{ for some } x\in M\}$, and for any pair of sets $U,V\subset \R^d$, the notation $U+V$ identifies the set $\{x\in\R^d\mid x=u+v,\text{ with } u\in U, v\in V\}$. Moreover, the unit sphere in $\R^d$ will be denoted by $\mathbb{S}^{d-1}$.
The set of matrices of dimension $m\times n$, with $n,m\in\N$, is denoted by  $\R^{m\times n}$ and given $A\in\R^{m\times n}$, $A^\top$ will denote its transpose. 
The space $\R^{m\times n}$ will be endowed with the induced norm defined by $\|A\|=\sup_{|x|=1}|Ax|$.

Oftentimes, we will deal the set $P_0(\R^d)$ of nonempty subsets of $\R^d$, and with the set $\K(\R^d)$  of nonempty compact subsets of $\R^d$. The latter will be endowed endowed with the Hausdorff distance $d_h:\K(\R^d)\times\K(\R^d)\to [0,\infty)$, defined by 
\[
d_h(M_1,M_2)= \max\{\dist(M_1,M_2),\dist(M_2,M_1)\},
\]
where $\dist(M_1,M_2)=\sup_{x\in M_1}\inf_{y\in M_2} |x-y|$. In particular, we note that $(\K(\R^d),d_h)$ is a metric space. For a nonempty set $M\subset\R^d$ we denote by $\partial M$ the topological boundary of $M$.

\subsection{Two-parameter semigroups and invariant nonautonomous sets}\label{subsec:nonautonomous}
In order to define a dynamical system in the presence of bounded noise or control, we firstly need to provide a description of the possible perturbations/controls for the system. For a bounded set $U\subset \R^d$ containing the origin, we consider the set of noise realisations 
\[
\U=\{\xi:\mathbb{R}\to \R^d\mid \xi(\mathbb{R})\subset U\}.
\]
Then, for each fixed noise realisation $\xi\in\U$ we further define the determinisitic nonautonomous two-parameter semigroup on $\R^d$ as,
\[
\Phi^{t,t_0}_\xi:\R^d\to \R^d,\quad x\mapsto\Phi^{t,t_0}_\xi(x),\quad \text{for all } t,t_0\in\mathbb{R} \text{ with } t>t_0.
\]
In particular, we require that, for every $t,t_0\in\mathbb{R}$ with $t>t_0$, every $x\in\R^d$ and every $\xi_0\in\U$,
\[
\sup_{s\in\mathbb{R}\cap[t,t_0]}|\Phi^{s,t_0}_\xi(x)-\Phi^{s,t_0}_{\xi_0}(x)|\to0\quad \text{as }  \sup_{s\in\mathbb{R}\cap[t,t_0]}|\xi(s)-\xi_0(s)|\to0.
\]
In other words, the evolution in time of every point $x\in\R^d$ is subjected to an evolution law which depends on time in two ways. The first is due to $\xi$, whereas the second is common to all the perturbations/controls. \smallskip

The collective effect of all the possible perturbations/controls can be studied via the two-parameter semigroup defined on the set of nonempty subsets  $P_0(\R^d)$ for all $t,t_0\in\mathbb{R} $ with $t>t_0$ via
\[
\Phi^{t,t_0}:P_0(\R^d)\to P_0(\R^d),\quad M\mapsto\Phi^{t,t_0}(M)=\bigcup_{x\in M, \xi\in\U}\Phi^{t,t_0}_\xi(x).
\]

A nonautonomous set is a family $\M=(M_t)_{t\in\R}\subset \R\times \R^d$. We refer to the subsets $M_t\subset \R^d$ for $t\in\R $ as to the fibres of $\M$.
Moreover, we say that  a nonautonomous set $\mathscr{M}=(M_t)_{t\in\R}\subset\R\times \R^d$ has a certain property, when such property is satisfied fibre-wise, e.g.~$\mathscr{M}$ nonempty if $M_t\neq\varnothing$ for all $t\in\R $, and compact if $M_t\subset\R^d$ is compact for all $t\in\R $. Note that this does not immediately imply that $\M$ is compact and not even bounded in $\R \times \R^d$. 
When the property identically holds for all the fibres of $\M$, then we say that it holds uniformly. For example, a nonautonomous set $\M$ is uniformly bounded if there is $\rho>0$ such that $M_t\subset B_\rho(0)$ for all $t\in\mathbb{R}$. 
A nonempty and compact  nonautonomous set $\M=(M_t)_{t\in\R}\subset\R\times \R^d$ is called \emph{forward invariant} for the two-parameter semigroup  $\Phi$ if 
\[
\Phi^{t,t_0}(M_{t_0} )\subseteq M_t,\quad\text{for all }t,t_0\in\R, \text{ with }t>t_0,
\]
and \emph{invariant} if the formula above holds with the equality. The existence of a forward invariant set for $\Phi$ is a sufficient condition for the existence of an invariant subset \cite{kloeden2011negatively}. 

Additionally, $\M$ is called a \emph{minimal invariant set} if it does not contain any proper non-empty closed subset which is invariant. Minimality is intended with respect to the compound effect of the perturbations/controls. Note that in the set-valued case the invariance is dynamically more relevant than in the single-valued case. In particular, minimal invariant sets are not individual trajectories in general. In this work, particular attention will be posed onto the boundary of nonautonomous invariant sets. For every $t\in\R$, we denote the boundary of the $t$-fibre $M_t$ of $\M$ with the symbol $\partial M_t$.

\subsection{Generalised normal cone and the outward unit normal bundle}\label{sec:generalized-unit-normal-bundle}
A central role in this work is played by the geometry of the boundary of an invariant set induced by certain differential inclusions. We hereby introduce classic notation that it is extensively used in our work. Given a compact set $M$ whose boundary $\partial M$ is a differentiable manifold,  the tangent space at a point $p\in \partial M$ is denoted by $T_p\partial M$, i.e.~the set of all tangent vectors at $p\in \partial M$. Moreover, we say that two vectors $v_1,v_2\in\R^d$ are normal, and write $v_1\bot v_2$, if the scalar product $\langle v_1, v_2\rangle=0$. The \emph{unit normal space} to $\partial M$ at $p$ is then denoted by $N_{1,p}\partial M$ and it consists of all the unit vectors which are normal to any element in $T_pM$. The \emph{outward unit normal bundle} is then defined as 
\begin{equation}\label{eq:normal-bundle}
N^+_1\partial M=\big\{(p,v)\in \partial M\times \mathbb{S}^{d-1}\mid v\in N_{1,p}\partial M \text{ outward-pointing}\big\},
\end{equation}
where the term \emph{outward-pointing} means that $p+\ep v\notin M$ for $\ep>0$ sufficiently small.

In some cases, we deal with topological manifolds which admit points of non-differentiability and the above definition for the outward unit normal bundle is not applicable. In order to deal with such cases it is necessary to consider a generalised normal cone. Let \( M \subset \mathbb{R}^d \) be compact and such that its boundary $\partial M$ is a (possibly non-differentiable) manifold. The \emph{Fréchet unit normal cone} at \( p\in \partial M \), is given by
\[
\widehat{UN}_p\partial M := \left\{ v \in \mathbb{S}^{d-1} \;\middle|\; \limsup_{y \to p,\, y \in \partial M} \frac{\langle v, y - p \rangle}{|y - p|} \leq 0 \right\}.
\]
The Fréchet \emph{outward} unit normal cone $\widehat {UN}^+_p\partial M$ is the set  of the outward-pointing vectors in the Fréchet normal cone. The \emph{Mordukhovich  outward unit normal cone} (also called the \emph{limiting  outward unit normal cone}) to \( \partial M \) at \( p \), is defined as
\[
N^+_{1,p}\partial M := \left\{ v \in \mathbb{S}^{d-1} \;\middle|\; \exists\, p_k \to p,\ v_k \to v,\ p_k \in \partial M,\ v_k \in \widehat{N}^+_{1,p_k}\partial M \right\}.
\]
When $\partial M$ is a smooth manifold, the Mordukhovich outward unit normal cone at a point coincides with the classical outward unit  normal space at that point, which justifies the apparent abuse of notation.
The outward unit normal bundle can therefore be defined as in \eqref{eq:normal-bundle}. 

\section{The boundary system for nonautonomous differential inclusions}\label{sec:boundary-mapping-continuous}
Our work deals with 
a family of nonautonomous differential equations of the type
\begin{equation}\label{eq:1}
    \dot x = f(t,x) + \rho\xi(t),\quad t\in\R,\, x\in\R^d,
\end{equation}
where $f:\R \times \R^d\to\R^d$ is a function smooth enough to guarantee existence and uniqueness of solutions for $\dot x=f(t,x)$---as we shall better clarify in the following---and $\xi:\R\to\R^d$ is a function in the set $\U=\{\xi\in L^\infty(\R,\R^d)\mid \|\xi\|_\infty\le1\}$ representing all the possible bounded perturbations/controls with values essentially in the unit sphere of $\R^d$. A unique solution to \eqref{eq:1} for each initial value and forcing $\xi$ exists in the sense of Carathéodory, i.e.~a locally absolutely continuous function satisfying the integral problem,
\[
x(t,t_0,x_0,\xi)=x_0+\int_{t_0}^t\Big(f\big(s,x(s)\big)+\rho\xi_s\Big)\,ds.
\]
In view of the above, we require that $f$ satisfies the weakest sufficient conditions for the existence and uniqueness of solutions, i.e.~the Carath\'eodory conditions,
\begin{itemize}[leftmargin=*]
    \item (local integrability) for any compact set $K\subset\R^d$ there exists a locally integrable function $m\in L^1_{loc}(\R)$ such that for almost every $t\in\R$, \[|f(t,x)|\le m(t) \quad \text{for all }x\in K;\]
    \item (local Lipschitz continuity) for any compact set $K\subset\R^d$ there exists a locally integrable function $l\in L^1_{loc}(\R)$ such that for almost every $t\in\R$, 
    \[|f(t,x)-f(t,y)|\le l(t)|x-y| \quad \text{for all }x,y\in K;\]
\end{itemize}
We will need a bit more regularity in the variable $x$ in order to study stability and parallel displacement of tangent spaces. Hence, we also assume that 
\begin{itemize}[leftmargin=*]
    \item (continuous differentiability) $f$ is continuously differentiable in the variable $x$ for almost every $t\in\R$, and we  denote by $D_xf$ the Jacobian of $f$ with respect to the $x$-variables. Moreover, we assume that for any compact set $K\subset\R^d$ and up to suitably rescaling $m\in L^1_{loc}(\R)$ we have that for almost every $t\in\R$, \[|D_xf(t,x)|\le m(t) \quad \text{for all }x\in K;\]
\end{itemize}
Since our interest lies in the study of invariant sets (cf.~Section \ref{subsec:nonautonomous}) and in particular attractors, we also assume that the solutions to~\eqref{eq:1} are globally defined (a priori bounds assuring this can be found in \cite{hale2009ordinary, bressan2007introduction}). 
A theorem by Filippov guarantees that an absolutely continuous map $x: [a,b]\to \R^d$ is a trajectory of~\eqref{eq:1} if and only if it satisfies the differential inclusion
\begin{equation}
    \label{eq:set-valued-systems}
\dot x\in F_\rho(t,x),\quad\text{where } F_\rho(t,x)=\overline B_\rho\big(f(t,x)\big),
\end{equation}
almost everywhere (see Theorem 3.1.1 in \cite{bressan2007introduction} for the proof in an even more general setting). Moreover, for any fixed $t_0<t$, the set of trajectories of the differential inclusion above is closed in $C([t_0,t],\R^d)$ \cite[Theorem 3.3.1]{bressan2007introduction}. 
We shall denote by $\Phi^{t,t_0}$ the evolution operator defined by,
\[
\Phi^{t,t_0}(x_0)=\{y\in\R^d\mid y=x(t,t_0,x_0,\xi) \text{ for } \xi\in\U\},
\] 
and neglect the dependence on $\rho$ from the notation (as $\rho$ remains fixed).
 Under the above mild assumptions, the map $x\mapsto \Phi^{t,t_0}(x)$ is upper semicontinuous, 
 it has compact values (\cite[Corollary 7.1]{deimling2011multivalued} and \cite{aubin1984differential}), but it is also lower semicontinuous (and thus continuous) due to the Lipschitz continuity of $f$ in $x$ \cite[Corollary 2.4.1]{aubin1984differential}. 
 This implies in particular that the image of a nonempty compact set $K\subset \R^d$ through $\Phi^{t,t_0}$ is also compact. Indeed, considering a sequence $(y_n)_\nin$ of elements in $\Phi^{t,t_0}(K)$, there is a sequence $(x_n)_\nin$ of elements in $K$ such that $y_n\in \Phi^{t,t_0}(x_n)$ for all $\nin$ by definition. Since $K$ is compact, then $(x_n)_\nin$ converges, up to a subsequence, to an element $\overline x\in K$. Hence, up to a subsequence, and using the continuity of $x\mapsto \Phi^{t,t_0}(x)$, we have that the following limit inclusion holds,
 \[
 \lim_\nti y_n\in  \lim_\nti \Phi^{t,t_0}(x_n)=\Phi^{t,t_0}(\overline x)\subset \Phi^{t,t_0}(K).
 \]
 Therefore, we obtain the following two-parameter semigroup acting on the set $\K(\R^d)$ of nonempty compact subsets of $\R^d$,
\[
\Phi^{t,t_0}:\K(\R^d)\to \K(\R^d),\quad M\mapsto\Phi^{t,t_0}(M)=\bigcup_{x\in M}\Phi^{t,t_0}(x),\quad \text{for all } t>t_0. 
\]
Note that, given $M\in \K(\R^d)$, the map  $t\mapsto \Phi^{t,t_0}(M)$ satisfies,
\[
d_h(M,\Phi^{t,t_0}(M))= d_h\big(\Phi^{t_0,t_0}(M ),\Phi^{t,t_0}(M_{t_0} )\big)\xrightarrow{t\to t_0} 0.
\]

\par\smallskip

Our first result shows that trajectories that remain on the boundary of an invariant set for a differential inclusion induced by \eqref{eq:1}, satisfy an optimal control problem with respect to the distance function to the boundary of the invariant set. This is a consequence of the Pontryagin's Maximum Principle, a central result in optimal control theory as we show next.

\begin{prop}\label{prop:boundary-trajectories-PMP}
     Consider an invariant set $\M=(M_t)_{t\in\R}$ for the set-valued mapping $\Phi$ with respect to $\U$. For any $[\tau_1,\tau_2]\subset\R$, if $\tau_1<t_0<\tau_2$, $x_0\in\partial M_{t_0}$ and $\xi^*\in L^\infty([\tau_1,\tau_2])$ with $\|\xi\|_\infty\le 1$ is such that $x^*(t):=x(t,t_0,x_0,\xi^*)\in\partial M_t$ for all $t\in[\tau_1,\tau_2]$, then $\xi^*$  minimises the cost function 
     \begin{equation}\label{eq:cost-function}
C(\xi,\tau_1,\tau_2)=\int_{\tau_1}^{\tau_2} f_0(t,x(t,\xi))\,dt\quad\text{where } f_0(t,x):=\inf_{p\in\partial M_t}|x-p|^2.
\end{equation}
Moreover, $\big(x^*(t),\xi^*(t)\big)$ solves the system of differential equations
\begin{equation}\label{eq:boundary-system}
     \begin{cases}
    \dot x=f(t,x)+\rho\,u,\\
    \dot u=-D_xf(t,x)^\top u+\langle u,D_xf(t,x)^\top u\rangle\, u.
\end{cases}
     \end{equation}
\end{prop}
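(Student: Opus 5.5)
The argument has two parts: the minimisation property, which is immediate, and the boundary system, which comes from the Pontryagin Maximum Principle.

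\emph{Minimality.} Since $f_0\ge 0$, every admissible control gives $C(\xi,\tau_1,\tau_2)\ge 0$. Along $x^*$ we have $x^*(t)\in\partial M_t$, so $f_0(t,x^*(t))=0$ for all $t\in[\tau_1,\tau_2]$. Hence $C(\xi^*,\tau_1,\tau_2)=0$, and $\xi^*$ attains the global minimum. I will set the problem up so that the PMP applies. Take the initial point free in a neighbourhood of $x^*(\tau_1)$, or equivalently fix $x(\tau_1)=x^*(\tau_1)$, and leave the endpoint free. Invariance gives $x(t,\xi)\in M_t$ for every admissible $\xi$. This does not affect minimality: trajectories leaving $M_t$ simply have larger cost.

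\emph{Costate equation.} I will write the Hamiltonian
\[
H(t,x,\lambda,\xi)=\langle\lambda,f(t,x)+\rho\xi\rangle-\lambda_0 f_0(t,x).
\]
The PMP then provides an absolutely continuous costate $\lambda(t)$ with $(\lambda_0,\lambda)\neq 0$. It satisfies
\[
\dot\lambda=-D_xf(t,x^*)^\top\lambda+\lambda_0\,\partial_x f_0(t,x^*)
\]
and the maximisation condition $\langle\lambda(t),\xi^*(t)\rangle=\max_{|\xi|\le1}\langle\lambda(t),\xi\rangle$ almost everywhere. The main obstacle is that $f_0$ is only Lipschitz (a squared distance function), so the PMP must be used in its nonsmooth form, with a Clarke or limiting subgradient in the costate equation. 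The key observation is that $f_0$ vanishes to second order on $\partial M_t$: $f_0(t,y)=\dist(y,\partial M_t)^2\le |y-x^*(t)|^2$. So near $x^*(t)$ it is differentiable with zero gradient, and every generalised gradient at $x^*(t)$ equals $\{0\}$. The costate equation therefore reduces to the linear adjoint equation $\dot\lambda=-D_xf(t,x^*)^\top\lambda$.

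\emph{Normalisation.} If $\lambda(t_1)=0$ for some $t_1$, then $\lambda\equiv0$ by uniqueness for the linear adjoint equation, and the PMP would force $\lambda_0\neq0$. This degenerate case has to be excluded. I would exclude it by arguing that the extremal is abnormal or can be normalised. The cleanest route is to note that since the cost is identically zero along $x^*$, one may take $\lambda_0=0$, with $\lambda\neq0$ supplied by the nontriviality condition of the free-endpoint problem. Equivalently, one can use the version of the PMP for boundary trajectories of reachable sets: $x^*(\tau_2)\in\partial\Phi^{\tau_2,\tau_1}(M_{\tau_1})=\partial M_{\tau_2}$, and this yields a nonzero adjoint $\lambda$. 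So $\lambda(t)\neq0$ on the whole interval. The maximisation condition then gives $\xi^*(t)=\lambda(t)/|\lambda(t)|=:u(t)$ almost everywhere, which is the first equation of \eqref{eq:boundary-system}.

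\emph{Sphere equation.} I will differentiate $u=\lambda/|\lambda|$ directly, using $\frac{d}{dt}|\lambda|=\langle\lambda,\dot\lambda\rangle/|\lambda|=-\langle u,D_xf^\top u\rangle|\lambda|$. This gives
\[
\dot u=\frac{\dot\lambda}{|\lambda|}-\frac{\lambda\,\frac{d}{dt}|\lambda|}{|\lambda|^2}=-D_xf^\top u+\langle u,D_xf^\top u\rangle u,
\]
which is the second equation of \eqref{eq:boundary-system}. I expect the genuinely delicate points to be two: justifying the nonsmooth PMP with the vanishing generalised gradient of $f_0$ along $x^*$, and securing nontriviality of $\lambda$.
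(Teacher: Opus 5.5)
Your outline follows the paper's proof: the PMP for the cost $\int f_0$, the vanishing gradient of $f_0$ along $x^*$, then $\xi^*=\lambda/|\lambda|$ and differentiation. You correctly isolate the two points the paper passes over.

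\emph{Nonsmoothness of $f_0$.} You handle this correctly, though the inequality $f_0(t,y)\le|y-x^*(t)|^2$ only gives differentiability \emph{at} $x^*(t)$, not near it. The right justification is $|f_0(t,y)-f_0(t,z)|\le|y-z|\,(\dist(y,\partial M_t)+\dist(z,\partial M_t))$. This makes the local Lipschitz constant vanish at $x^*(t)$, so the Clarke gradient there is $\{0\}$.

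\emph{Nonvanishing of the spatial costate.} The paper simply assumes this when it divides by $|\eta^*|$, and your ``cleanest route'' for it fails.
\begin{enumerate}
\item In the free-endpoint problem, transversality gives $\lambda(\tau_2)=0$. If $\lambda_0=0$, the adjoint equation is linear homogeneous, so $\lambda\equiv0$ and nontriviality is violated. That problem is therefore necessarily normal, and you cannot choose $\lambda_0=0$.
\item With $\lambda_0>0$ and $\partial_x f_0(t,x^*(t))=\{0\}$, the equation is again homogeneous with $\lambda(\tau_2)=0$. So $\lambda\equiv0$, and the maximum condition says nothing about $\xi^*$.
\item Fixing both endpoints does not help either, since $(\lambda_0,\lambda)=(1,0)$ then satisfies every condition of the PMP.
\end{enumerate}
The obstruction is structural: minimising $\int f_0$ never uses the invariance of $\M$, and without invariance the conclusion is false. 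Take $f\equiv0$, $M_t$ a fixed closed disc, and a trajectory running along its boundary circle with tangential unit $\xi$. The cost is zero and the trajectory stays on $\partial M_t$, but $\xi$ rotates, whereas the boundary system reduces to $\dot x=\rho u$, $\dot u=0$.

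The route you call ``equivalent'' is in fact the only one of the two that works, and it is a different argument. By invariance, $\Phi^{\tau_2,\tau_1}(x^*(\tau_1))\subset M_{\tau_2}$, and $x^*(\tau_2)\in\partial M_{\tau_2}$. Hence $x^*(\tau_2)$ is a boundary point of the reachable set at time $\tau_2$ from the single point $x^*(\tau_1)$. The geometric PMP for boundary trajectories, which holds under the paper's Carath\'eodory hypotheses, then gives a costate $\lambda\neq0$ with
\[
\dot\lambda=-D_xf(t,x^*)^\top\lambda,\qquad \langle\lambda,\xi^*\rangle=|\lambda| \text{ a.e.\ on } [\tau_1,\tau_2].
\]
Your normalisation computation then finishes the proof. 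Since no $f_0$ term enters, the cost function, its nonsmoothness, and the measurability of $t\mapsto f_0(t,x)$ become irrelevant to the second claim; the cost only matters for the trivial minimality statement. Build the proof on this reachable-set argument and drop the cost-based nontriviality claim. This also repairs the same unjustified division by $|\eta^*|$ in Step~2 of the paper's proof.
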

\begin{proof}

Let $t_0\in[\tau_1,\tau_2]$, $x_0\in\partial M_{t_0}$ and assume that the solution $x(t,t_0,x_0,\xi^*)$ to~\eqref{eq:1} with $\xi=\xi^*$ and $x(t_0)=x_0$ satisfies $x(t,t_0,x_0,\xi^*)\in\partial M_t$ for all $t\in[\tau_1,\tau_2]$. Note that for every $t\in\R$ the zero-level set for $f_0(t,x)$ coincides with the points in $\partial M_t$ and the cost function \eqref{eq:cost-function} penalises the responses $x(t,t_0,x_0,\xi)$ whose graphs do not lie on  $(\partial M_t)_{t\in\R}$. 
Since the cost function \eqref{eq:cost-function} is non-negative and $C(\xi^*,\tau_1,\tau_2)=0$ by construction, then it is immediate that $\xi^*\in L^\infty([\tau_1,\tau_2])$ minimises \eqref{eq:cost-function}. \smallskip

In order to show that $\big(x^*(t),\xi^*(t)\big)$ solves \eqref{eq:boundary-system}, we proceed in two steps. Firstly, we construct an Hamiltonian system which simultaneously takes care of \eqref{eq:1} and \eqref{eq:cost-function} and use the fact that $\xi^*$, as a minimiser of \eqref{eq:cost-function}, needs to satisfy the necessary condition for optimality known as Pontryagin Maximum Principle. Secondly, we normalise the action in the  Hamiltonian system to obtain \eqref{eq:boundary-system}.\smallskip

\emph{Step 1: $u^*$ satisfies Pontryagin's Maximum Principle.}
Let
\[
\widetilde x(t)=\left(x^0(t),x(t),t\right)^\top,\quad t\in[\tau_1,\tau_2],
\]
 be the unique solution of the extended initial value problem, 
\begin{equation}\label{eq:time-augmented-response}
\dot{\widetilde x}=\widetilde f(\widetilde x,u)
\quad\leftrightarrow\quad
\begin{cases}
    \dot x^0=f_0(t,x)\\
    \dot x= f(t,x)+\rho u(t)\\
    \dot t=1
\end{cases}
\quad\widetilde x(t_0)=
\begin{pmatrix}
0\\ x_0\\t_0    
\end{pmatrix}.
\end{equation}
Note that $x(t)$ coincides with $x^*(t)=x(t,t_0,x_0,\xi^*)$ if $u=\xi^*$, and in this case we will denote the extended solution by $\widetilde x^*$. Let us consider the Hamiltonian function, 
\begin{equation}\label{eq:hamiltonian-function}
\widetilde H(\widetilde \eta,\widetilde x,u)=\eta_0 f_0(t,x)+\eta_1 (f_1(t,x)+\rho u_1)+\dots+\eta_d(f_d(t,x)+ \rho u_d)+\eta_{d+1}
\end{equation}
which produces the Hamiltonian system
\[
\begin{cases}\displaystyle
    \dot{\widetilde x}_j= \frac{\partial\widetilde H}{\partial\widetilde\eta_j}(\widetilde \eta,\widetilde x,u)\quad j=0,\dots,d+1,\\[2ex] \displaystyle
    \dot{\widetilde\eta}_j= -\frac{\partial\widetilde H}{\partial\widetilde x_j}(\widetilde \eta,\widetilde x,u)\quad j=0,\dots,d+1,
\end{cases}
\]
where the first $d+2$ equations are exactly those in \eqref{eq:time-augmented-response}, while the last $d+2$ equations define the time-augmented adjoint system, 
\begin{equation}\label{eq:time-augmented-adjoint}
\dot{\widetilde\eta}=-\widetilde\eta\, \frac{\partial \widetilde f}{\partial \widetilde x}\big(\widetilde x(t)\big)\quad\leftrightarrow\quad
\begin{cases}
    \dot \eta^0=0\\
    \dot \eta_j= -\sum_{i=0}^d\eta_i \frac{\partial f_i}{\partial x_j}\big(t,x(t)\big),\quad j=1,\dots,d\\
    \dot \eta_{d+1}=-\sum_{i=0}^d\eta_i \frac{\partial f_i}{\partial t}\big(t,x(t)\big).
\end{cases}
\end{equation}
Since $C(\xi^*,\tau_1,\tau_2)=0$, i.e.~$\xi^*$ is optimal, then it must satisfy Pontryagin's Maximum Principle (see for example Theorem 2 in \cite{clarke2010pontryagin}), i.e.~there must exist a nontrivial time-augmented adjoint response $\widetilde \eta^*(t)$ (solving \eqref{eq:time-augmented-adjoint}) such that
\begin{equation}\label{eq:PMP}
\widetilde H(\widetilde \eta^*(t),\widetilde x^*(t),\xi^*(t))=\max_{u\in\U}\widetilde H(\widetilde \eta^*(t),\widetilde x^*(t),u(t))\quad \text{for a.a.~} t\in [\tau_1,\tau_2]. 
\end{equation}  

\emph{Step 2: $\big(x^*(t),\xi^*(t)\big)$ solves \eqref{eq:boundary-system}.}
Note that for every $t\in\R$ fixed, $x\mapsto f_0(t,x)$ is a differentiable function (the partial derivatives exist at every point and are continuous), and its zero-level set---the function minimum---corresponds to points in $\partial M_t$. Hence, and because of $|x-p|^2$, we have that the necessary condition for criticality,
\begin{equation}\label{eq:partial_f0=0}
D_xf_0(t,x)=0,\quad \text{for all } x\in\partial M_t,
\end{equation}
must be satisfied. 

In particular, given the formula \eqref{eq:hamiltonian-function}, it must be that $(\eta^*(t))^\top$ and $\xi^*(t)$ are parallel for all $t\in[[\tau_1,\tau_2]$ and furthermore $\|\xi^*\|_\infty=1$. Then, we can write $\xi^*(t)=\tfrac{(\eta^*(t))^\top}{|\eta^*(t)|}$, and  we have that 
\[
\begin{split}
\frac{d \xi^*}{dt}&=\frac{(\dot \eta^*)^\top|\eta^*|-(\eta^*)^\top\frac{d}{dt}|\eta^*|}{|\eta^*|^2}
=\frac{(\dot \eta^*)^\top}{|\eta^*|}-\frac{2(\eta^*)^\top\,\eta^*(\dot \eta^*)^\top}{2|\eta^*|^3}\\[1ex]
&=\frac{1}{|\eta^*|}\left(-\eta_0D_x f_0(t,x)-D_xf(t,x)^\top \eta^*\right)+\\
&\qquad\qquad\qquad\qquad\quad\ \, +\frac{(\eta^*)^\top\,\langle \eta^*,\eta_0D_x f_0(t,x)+D_xf(t,x)^\top \eta^*\rangle}{|\eta^*|^3}\\
&=-\frac{\eta_0}{|\eta^*|}D_x f_0(t,x)-D_xf(t,x)^\top \xi^*+\\
&\qquad\qquad\qquad\qquad\quad\ \, +\left(\langle  \xi^*,D_xf(t,x)^\top u^*\rangle +\left\langle  \xi^*,\frac{\eta_0}{|\eta^*|} D_x f_0(t,x)\right\rangle\right)  \xi^*.
\end{split}
\]
Hence, due to \eqref{eq:partial_f0=0}, we have that $\big(x^*(t),\xi^*(t)\big)$ must satisfy \eqref{eq:boundary-system}. 
\end{proof}

\begin{rmk}
    The proof of Proposition \ref{prop:boundary-trajectories-PMP} reveals  that the boundary system \eqref{eq:boundary-system} is in fact the Pontryagin system up to constraining the solution of the equation for the normal $n$ to the unit sphere via normalisation.
\end{rmk}

Next, we prove a basic property guaranteeing that trajectories starting in the interior of an invariant set for \eqref{eq:diff-incl} (cf.~Section \ref{subsec:nonautonomous}), cannot reach the boundary in finite time.

\begin{prop}\label{prop:trajectories-on-boundary}
    Let $\M=(M_t)_{t\in\R}$ be an invariant set for the set-valued semi-flow $\Phi^{t,t_0}$ with respect to $\U$. Then, for any $t>t_0$, any $x_0\in M_{t_0}\setminus\partial M_{t_0}$, and any function $\xi\in L^\infty(\R,\R^d)$, with
    $\|\xi\|_\infty\le 1$, the solution $x(\cdot,t_0,x_0,\xi)$ to $\dot x=f(\tau,x)+\rho \xi(\tau)$ with with $x(t_0)=x_0$ satisfies $x(t,t_0,x_0,\xi)\notin\partial M_t$. 
\end{prop}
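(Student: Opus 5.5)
We argue by contradiction. We exploit the fact that $\Phi^{t,t_0}$ maps a small ball around an interior point onto a set containing a full neighbourhood of the endpoint of any trajectory. Suppose $x_0\in M_{t_0}\setminus\partial M_{t_0}$, so that there is $\delta>0$ with $\overline B_\delta(x_0)\subset M_{t_0}$. Suppose also that for some $t>t_0$ and some admissible $\xi$ the point $y:=x(t,t_0,x_0,\xi)$ lies in $\partial M_t$. The aim is to show that a whole neighbourhood of $y$ is contained in $\Phi^{t,t_0}(M_{t_0})=M_t$. Then $y$ is an interior point of $M_t$, which contradicts $y\in\partial M_t$, since $M_t$ is compact and hence closed.

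\emph{Step 1: perturbing the initial condition.} For every $z\in B_\delta(x_0)$ the solution $x(\cdot,t_0,z,\xi)$ with the \emph{same} control $\xi$ lies in $\Phi^{t,t_0}(M_{t_0})=M_t$. It therefore suffices to show that the map $z\mapsto x(t,t_0,z,\xi)$ sends $B_\delta(x_0)$ onto a neighbourhood of $y$. Fix the control and consider the nonautonomous ODE $\dot x=f(s,x)+\rho\xi(s)$. Its right-hand side satisfies the Carathéodory and local Lipschitz conditions, with a locally integrable Lipschitz bound $l$. Solutions are globally defined and unique both forward and backward in time. Hence the time-$t$ map $z\mapsto x(t,t_0,z,\xi)$ is a homeomorphism of $\R^d$ onto its image, and its inverse is the backward flow from time $t$ to time $t_0$, which is continuous by Gronwall's inequality with the integrable rate $l$.

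\emph{Step 2: openness.} Let $\psi$ denote the backward map $w\mapsto x(t_0,t,w,\xi)$. It is continuous, and $\psi(y)=x_0$. So there is $\varepsilon>0$ with $\psi(B_\varepsilon(y))\subset B_\delta(x_0)$. For $w\in B_\varepsilon(y)$, set $z=\psi(w)\in B_\delta(x_0)\subset M_{t_0}$. Uniqueness gives $x(t,t_0,z,\xi)=w$, so $w\in\Phi^{t,t_0}(M_{t_0})=M_t$ by invariance. Hence $B_\varepsilon(y)\subset M_t$, which is the desired contradiction. Alternatively, one can avoid invariance here and use only forward invariance, $\Phi^{t,t_0}(M_{t_0})\subseteq M_t$.

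\emph{Main obstacle.} The only delicate point is justifying the backward solvability and continuous dependence under mere Carathéodory regularity in $t$. On the compact segment $[t_0,t]$ the trajectory stays in a compact set $K$, where $f$ is Lipschitz with an $L^1$ rate. The backward problem $\dot x=f(s,x)+\rho\xi(s)$ on $[t_0,t]$ is again a Carathéodory problem. For $w$ near $y$, Gronwall's estimate $|x(s,t,w,\xi)-x(s,t,y,\xi)|\le |w-y|\exp\int_{t_0}^t l$ keeps the backward solution inside a neighbourhood of $K$ for the whole interval. This prevents blow-up and yields both existence of $\psi$ near $y$ and its continuity. I expect writing this localisation carefully to be the bulk of the work; the rest is immediate.
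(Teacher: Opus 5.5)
Your proposal is correct and follows the same route as the paper: fix the control $\xi$, use that the time-$t$ map of $\dot x=f(s,x)+\rho\xi(s)$ sends a ball around the interior point $x_0$ onto a neighbourhood of $y=x(t,t_0,x_0,\xi)$, and conclude from (forward) invariance that this neighbourhood lies in $M_t$, which contradicts $y\in\partial M_t$. The paper compresses your Steps 1--2 into the phrase ``by continuity and uniqueness of solutions''; your continuous backward map $\psi$ is exactly what justifies that phrase, provided the Gronwall Lipschitz rate is taken on a compact neighbourhood of the trajectory rather than on the trajectory's compact set alone.
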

\begin{proof}
Assume, by contradiction, that there are $t>t_0$, $x_0\in M_{t_0}\setminus \partial M_{t_0}$ and a function $\xi:\R\to \overline B_1(0)$ such that $x(t,t_0,x_0,\xi)\in\partial M_t$. Then, a constant $\ep=\ep(t_0,x_0)>0$ must exists such that $B_\ep(x_0)$ is also contained in the interior of $M_{t_0}$. 
However, since $x(t,t_0,x_0,\xi)\in\partial M_t$, then, by continuity and uniqueness of solutions, the set of points $x(t,t_0,B_\ep(x_0),\xi)$ has non-void intersection with the complement of $\overline M_t$ in $\R^d$, which is not possible due to the invariance of $\M$. Therefore, it must be that such $t>t_0$, $x_0\in M_{t_0}\setminus \partial M_{t_0}$ and $\xi:\R\to \overline B_1(0)$ do not exist.
\end{proof}

\begin{rmk}\label{rmk:backward-time-PMP}
As a consequence of the previous two results, we have the following important property about the past of points on the boundary of an invariant set. If $t_0\in\R$ and $x_0\in\partial M_{t_0}$, then, a function $\xi\in L^\infty(\R,\R^d)$, with   $\|\xi\|_\infty\le 1$ exists, such that the solution $x(\cdot,t_0,x_0,\xi)$ to $\dot x=f(\tau,x)+\rho \xi(\tau)$ with $x(t_0)=x_0$ satisfies $x(t,t_0,x_0,\xi)\in\partial M_t$ for all $t\le t_0$. Moreover, $(x(t,t_0,x_0,\xi),\xi(t))$ solves the differential system \eqref{eq:boundary-system}.  Indeed, since no point on the boundary can be reached in finite time by a trajectory inside the invariant set, and due to the invariance itself, such a function $\xi$ must exist and the solution $x(t,t_0,x_0,\xi)\in\partial M_t$ for all $t\le t_0$. Then, Proposition \ref{prop:boundary-trajectories-PMP} applies.
\end{rmk}

Next, we show that the conclusions of Remark \ref{rmk:backward-time-PMP} can be strengthened to the backward invariance of the outward unit normal bundle of the invariant set $\M=(M_t)_{t\in\R}$ with respect to the system \eqref{eq:boundary-system}. Recall that when the boundary $\partial M$ of $\M$ is a possibly non-differentiable manifold, the outward unit normal bundle at every fiber $\partial M_t$ is defined by 
\[
N^+_1\partial M_t=\big\{(p,v)\in \partial M_t\times \mathbb{S}^{d-1}\mid v\in  \partial N_{1,p}\partial M_t \text{ outward-pointing}\big\},
\]
where $N_{1,p}\partial M_t$ is the Mordukhovich outward unit normal cone at a point $p\in\partial M_t$ (cf.~Section \ref{sec:generalized-unit-normal-bundle}), and $\partial N_{1,p}\partial M_t$ is its boundary.
We call the \emph{(Mordukhovich) outward unit normal bundle} the nonautonomous set 
\[
N^+_1\partial \M=(N^+_1\partial M_t)_{t\in\R}.
\]

\begin{prop}\label{prop:backward-invariance}
  Consider an invariant set $\M=(M_t)_{t\in\R}$ for the set-valued semi-flow $\Phi^{t,t_0}$ with respect to $\U$. Then, the outward unit normal bundle $N^+_1\partial \M$ is backward invariant under the two-parameter semigroup induced by \eqref{eq:boundary-system}.
 \end{prop}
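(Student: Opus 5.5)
Backward invariance here means the following. Let $\Psi^{t,t_0}$ denote the two-parameter flow of \eqref{eq:boundary-system} on $\R^d\times\mathbb{S}^{d-1}$; note that the $u$-equation preserves $|u|=1$. The claim is that for every $t_0\in\R$, every $(x_0,v_0)\in N^+_1\partial M_{t_0}$ and every $t\le t_0$, we have $\Psi^{t,t_0}(x_0,v_0)\in N^+_1\partial M_t$. The plan is to write the flowed pair as $(x(t),u(t))$ and prove two things for all $t\le t_0$: the base point stays on the boundary, $x(t)\in\partial M_t$, and the fibre component stays a limiting outward normal there, $u(t)\in N^+_{1,x(t)}\partial M_t$. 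The strategy is to argue first at points where a Fréchet outward normal exists, and then pass to limits using continuous dependence of $\Psi$ on initial data, since the Mordukhovich cone is defined as a closure of Fréchet cones.

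\emph{Step 1 (Fréchet normals, base point on the boundary).} Take $v_0\in\widehat{UN}^+_{x_0}\partial M_{t_0}$. Consider the backward trajectory with constant control direction $u(\cdot)$ given by the boundary system. I want to show $x(t)$ cannot leave $M_t$ or enter its interior for $t<t_0$.
\begin{itemize}
\item \emph{Not in the interior.} If $x(t)\in\operatorname{int}M_t$ for some $t<t_0$, Proposition \ref{prop:trajectories-on-boundary} forbids reaching $\partial M_{t_0}$ at time $t_0$.
\item \emph{Not outside.} If $x(t)\notin M_t$, I would use the comparison below.
\end{itemize}
The comparison runs as follows. Let $\eta(s)$ be the adjoint solution of $\dot\eta=-D_xf^\top\eta$ with $\eta(t_0)=v_0$, so that $u=\eta/|\eta|$. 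For any admissible trajectory $y$ with $y(t)\in M_t$, the quantity $\frac{d}{ds}\langle\eta(s),x(s)-y(s)\rangle$ equals $\rho|\eta|-\rho\langle\eta,\xi\rangle$ plus a second-order term. This is the standard PMP/support-function argument: the control $u=\eta/|\eta|$ maximises $\langle\eta,\dot x\rangle$. Hence $\langle v_0,\cdot\rangle$ is, to first order, maximised over $\Phi^{t_0,t}(M_t)=M_{t_0}$ by the image of $x(t)$.

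Combined with the Fréchet normality of $v_0$ at $x_0$, this should force the backward point to be a boundary point. Conversely, Remark \ref{rmk:backward-time-PMP} supplies a boundary trajectory ending at $x_0$ along which, by Proposition \ref{prop:boundary-trajectories-PMP}, the pair solves \eqref{eq:boundary-system}. The key identification is that its costate at $t_0$ can be chosen equal to $v_0$. To get this I would apply the PMP in the form with terminal transversality, namely the costate lying in the normal cone of the target reachable set (Clarke's version, as cited in the proof of Proposition \ref{prop:boundary-trajectories-PMP}). Then uniqueness of solutions of \eqref{eq:boundary-system} shows this trajectory coincides with $\Psi^{t,t_0}(x_0,v_0)$.

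\emph{Step 2 (normal propagates).} Transversality of the PMP gives $\eta(t)$ in the limiting normal cone of $M_t$ at $x(t)$ for each intermediate $t$. The reason is that $M_{t_0}=\Phi^{t_0,t}(M_t)$, and the sensitivity of the reachable set is transported by the adjoint equation: if $w$ is a Fréchet outward normal to $M_{t_0}$ at $x_0$, then $(D\text{flow})^\top w$ is a normal to $M_t$ at $x(t)$. It is outward-pointing because $\langle v_0,\cdot\rangle$ is maximised at $x_0$ and pulled back. Normalising yields $u(t)\in N^+_{1,x(t)}\partial M_t$.

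\emph{Step 3 (limits).} For a general Mordukhovich normal $v_0=\lim v_k$ with $v_k$ Fréchet at $p_k\to x_0$, apply Steps 1--2 to each pair $(p_k,v_k)$. Then use continuity of $\Psi^{t,t_0}$, which holds by the Carathéodory hypotheses on $D_xf$, together with closedness of $\partial M_t$ and the definition of the limiting cone, to conclude that $\Psi^{t,t_0}(x_0,v_0)\in N^+_1\partial M_t$.

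The main obstacle is Step 1's identification: showing that the costate delivered by the PMP at $t_0$ can be prescribed to be any given Fréchet normal $v_0$, rather than merely some nonzero vector. This requires formulating the boundary trajectory as maximising the linear functional $\langle v_0,x(t_0)\rangle$ over the reachable set $\Phi^{t_0,t}(M_t)$, with free initial point in $M_t$. That formulation is valid to first order by Fréchet normality, and the PMP is then applied with the corresponding terminal condition $\eta(t_0)=v_0$.
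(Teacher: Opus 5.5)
Your plan is sound, and it differs from the paper's proof at the decisive step. The paper starts from the backward solution of the boundary system through $(\overline x,\overline n)$ and asserts that its $x$-component stays in $M_\tau$ ``because $\M$ is invariant''. It then puts that component on $\partial M_\tau$ via Proposition \ref{prop:trajectories-on-boundary}. It keeps the normal by noting that $\eta(\tau)v(\tau)$ is constant when $v$ solves the variational equation, so $n^*(\tau)$ stays orthogonal to the transported hyperplane $H_\tau$.

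You reverse the direction. You take the trajectory that invariance actually supplies: some admissible trajectory from $M_t$ ending at $x_0$, with an a priori unknown control. You show its control is forced to be $\eta/|\eta|$ with $\eta(t_0)=v_0$, and let uniqueness for \eqref{eq:boundary-system} identify it with the boundary flow. You then pull back Fréchet normals and close up to reach Mordukhovich normals.

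This route buys three things.
\begin{itemize}
\item It justifies the point the paper takes for granted: invariance yields \emph{some} trajectory into $x_0$, not one driven by the prescribed control $n^*$.
\item It explains why the transported hyperplane is related to $M_\tau$ at all. Let $Y(t_0,s)$ be the transition matrix of $\dot v=D_xf(s,x(s))v$. The fixed-control flow map sends $M_t$ into $M_{t_0}$ by forward invariance and has derivative $Y(t_0,t)$, so $Y(t_0,t)^\top v_0$ is a Fréchet normal to $M_t$ at $x(t)$. Any nonzero Fréchet normal is automatically outward.
\item It actually handles the limiting cone in the statement.
\end{itemize}
The paper's argument is shorter, but its first step is asserted rather than derived.

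The crux as you phrase it needs one repair. ``Valid to first order'' does not by itself license applying the PMP to $\langle v_0,x(t_0)\rangle$. For nonconvex $M_{t_0}$ that functional need not be even locally maximised at $x_0$. There are two ways to fix this.
\begin{itemize}
\item Replace the functional by a $C^1$ function $g$ with $\nabla g(x_0)=v_0$ attaining a local maximum on $M_{t_0}$ at $x_0$; such a $g$ exists for every Fréchet normal. Apply the PMP to the Mayer problem with initial constraint $x(t)\in M_t$. Exclude the abnormal multiplier because $\eta(t_0)=0$ would force $\eta\equiv 0$, the adjoint equation being linear.
\item Skip the PMP. A needle variation of the control at a Lebesgue point $s$ keeps the initial point in $M_t$. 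By forward invariance the varied endpoint $x_0+h\rho\, Y(t_0,s)(w-\xi(s))+o(h)$ lies in $M_{t_0}$. Fréchet normality then gives $\langle Y(t_0,s)^\top v_0,\,w-\xi(s)\rangle\le 0$ for all $|w|\le 1$. Hence $\xi(s)=\eta(s)/|\eta(s)|$, since $\eta(s)=Y(t_0,s)^\top v_0\neq 0$.
\end{itemize}
Either way you need $v_0$ to be Fréchet normal relative to $M_{t_0}$ itself, since varied endpoints need not lie on $\partial M_{t_0}$.

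Once this identification is in place, the ``not outside'' comparison in Step 1 is unnecessary, and as written it yields no contradiction. The fact that $x(t)\in M_t$ follows directly from the identification.

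Finally, the continuity you use in Step 3 comes from uniqueness for \eqref{eq:boundary-system}. The Carath\'eodory hypotheses on $D_xf$ alone do not give that; local Lipschitz continuity of $D_xf$ in $x$ would. The statement, however, already presupposes a well-defined flow.
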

 \begin{proof}
 Consider the solution $(x^*(t),\eta(t))$ of 
\[
     \begin{cases}
    \dot x=f(t,x)+\rho\,\eta/|\eta|,\\
    \dot \eta=-D_xf(t,x)^\top \eta.
\end{cases}
\]
with initial condition $(\overline x, \overline n)\in N^+_1 M_t$ at time $t$. 
Moreover, let $n^*(\tau):=\eta(\tau)/|\eta(\tau)|$ and notice that, by construction, $\|n^*\|_\infty=1$. Then, the solution $x(\tau,t, \overline x, n^*)$ to 
\[
\dot x= f(t,x)+\rho n^*(t), \quad x(t)=\overline x
\]
satisfies  $x(\tau,t, \overline x, n^*)\in M_\tau$ for all $\tau\in\R$ because $\M$ is invariant by assumption, and since $\overline x\in\partial M_t$, it must also be that $x(\tau,t, \overline x, n^*)\in \partial M_\tau$ for all $\tau<t$ thanks to Proposition \ref{prop:trajectories-on-boundary}. Hence, Proposition \ref{prop:boundary-trajectories-PMP} guarantees that $\big(x(\tau,t, \overline x, n^*),n^*(\tau)\big)$ is a solution to  \eqref{eq:boundary-system}. That is,
\[
x(\tau,t, \overline x, n^*)=x^*(\tau)\quad\text{for all }\tau\in\R.
\]
It is left to prove that $n^*(\tau)\in \partial N^+_{1,x^*(\tau)}\partial M(\tau)$ for almost all $\tau < t$.  
Let $H_{t}$ be the hyperplane intersecting $\partial M_t$ at $\overline x$ which is normal to $\overline n$. Note that $H_{t}$  undergoes a parallel displacement to $H_{\tau}$ via the variational equation along $x^*(\tau)$ i.e.~$H_\tau=\Psi(\tau,t)H_t$.
Hence, taken any $v_0\in H_t$, the unique solution $v(\tau)$ to the initial value problem 
     \[
     \dot v =D_xf(\tau,x^*(\tau))v,\quad v(t)=v_0,
     \]
     and the solution to the adjoint problem
    \[
    \dot \eta =-\eta D_xf(\tau,x^*(\tau)),\qquad \eta(t)=\overline n^\top,
    \]
    we have that,
    \[    \eta(\tau)v(\tau)=\overline n^\top\Psi(\tau,t)^{-1}\Psi(\tau,t)v_0=\overline n^\top v_0=0\quad\text{for all } \tau\in[t_0,t].
    \]
    which means that $n^*(\tau)=\eta(\tau)^\top/|\eta(\tau)|\bot H_\tau$ for all $\tau\in[t_0,t]$, hence $\big(x^*(\tau),n^*(\tau)\big)\in N^+_{1}\partial M_{\tau}$ for almost all $\tau < t$.
 \end{proof}

Having discussed the past of points on the boundary of an invariant set for the differential inclusion \eqref{eq:set-valued-systems}, we now focus on their future. 
Under the additional assumption of differentiability of the boundary of the invariant set $\M$, we can show that points on the boundary admit at least one trajectory that also remains on the boundary for future times.
\begin{prop}\label{prop:boundary-forward-invariant}
    Consider a nonautonomous set $\M=(M_t)_{t\in\R}$ that is invariant for the set-valued two-parameter semigroup $\Phi^{t,t_0}$. If $\M$ has (fibre-wise) continuously differentiable boundary, then for any $t_0\in\R$ and any $x_0\in \partial M_{t_0}$  there is $\xi\in L^\infty$ with $\|\xi\|_\infty\le1$ such that $x(t,t_0,x_0,\xi)\in\partial M_t$  for all $t>t_0$.
\end{prop}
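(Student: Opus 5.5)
The plan is to build a trajectory that stays on the boundary by choosing, at each point, the control given by the outward unit normal. This turns the question into a viability problem for the closed set $\partial\M$, which can be attacked through a tangency (Nagumo-type) condition.

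First I record what invariance imposes on the boundary. Fix $t_0$ and $x_0\in\partial M_{t_0}$, and let $n(t_0,x_0)$ be the outward unit normal, which is well defined and continuous in $x$ because $\partial M_t$ is $C^1$. Forward invariance $\Phi^{t,t_0}(M_{t_0})\subseteq M_t$ forces every admissible velocity at $x_0$ to point weakly inward relative to the motion of the boundary. Let $\nu(t_0,x_0)$ denote the outward normal speed of $\partial M_t$ at $x_0$. Then, at least at times where the velocity is defined,
\[
\langle f(t_0,x_0)+\rho\, u,\; n(t_0,x_0)\rangle\le \nu(t_0,x_0)\quad\text{for all }|u|\le1 .
\]
Taking $u=n$ gives $\langle f,n\rangle+\rho\le\nu$.

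Conversely, invariance in the other direction ($M_t\subseteq\Phi^{t,t_0}(M_{t_0})$) together with Remark \ref{rmk:backward-time-PMP} gives, for every boundary point, a backward boundary trajectory. Along that trajectory Proposition \ref{prop:boundary-trajectories-PMP} and Proposition \ref{prop:backward-invariance} show the control equals the normal, so the boundary moves outward with speed exactly $\langle f,n\rangle+\rho$. I will use this to derive the equality $\nu=\langle f,n\rangle+\rho$ at every boundary point. Consequently the velocity $f(t,x)+\rho\, n(t,x)$ is tangent to the moving boundary, i.e.\ it lies in the contingent cone of the graph $\{(t,x): x\in\partial M_t\}$.

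Next I apply a viability theorem. I consider the differential inclusion $\dot x\in F_\rho(t,x)$ restricted to the closed set $\mathcal G=\{(t,x):x\in\partial M_t\}$ in extended phase space. Because $F_\rho$ has compact convex values and is upper semicontinuous in $x$ with Carath\'eodory dependence on $t$, the Nagumo--Haddad viability theorem (Aubin--Cellina) gives a viable solution on $[t_0,\infty)$, provided $(1,F_\rho(t,x))$ meets the contingent cone $T_{\mathcal G}(t,x)$ for almost every $t$. The tangency established in the previous step provides exactly this, with selection $u=n$. Global existence then follows from the standing assumption that solutions are globally defined. Filippov's theorem converts the viable solution into some $\xi\in\U$ with $x(t,t_0,x_0,\xi)\in\partial M_t$ for all $t>t_0$.

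The main obstacle is the middle step: turning invariance into the exact tangency condition in the measurable-in-time setting. Two difficulties arise there. The normal speed $\nu$ of a merely fibrewise $C^1$ family is not obviously defined, and viability theorems want a condition for almost every time. If the pointwise equality is hard to justify, I would fall back on an approximation argument. For small $h>0$, use invariance ($M_{t+h}\subseteq\Phi^{t+h,t}(M_t)$) to pick points of $\partial M_{t+h}$ reached from $\partial M_t$. Proposition \ref{prop:trajectories-on-boundary} ensures these come only from boundary points along boundary arcs. Concatenating such arcs gives polygonal-in-time boundary solutions, and I then pass to the limit using the closedness of the trajectory set of the inclusion in $C([t_0,t],\R^d)$ (Bressan, Thm.\ 3.3.1) and the continuity of $t\mapsto\partial M_t$. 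The $C^1$ hypothesis enters here to guarantee that a limit of boundary points is a boundary point and that the normals converge.
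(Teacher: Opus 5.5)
There is a genuine gap, and it is a global one rather than the local tangency issue you flag. You take $\mathcal G=\{(t,x): x\in\partial M_t\}$ to be closed in $\R\times\R^d$, and your fallback uses continuity of $t\mapsto\partial M_t$. Neither follows from invariance plus fibrewise $C^1$ boundaries. A boundary component can vanish in finite time, and when it does the statement itself fails. Take $d=2$, $\rho=1$, $f(t,x)=-x$ and
\[
M_t=\{x\in\R^2 : \max(r(t),0)\le|x|\le R(t)\},\qquad r(t)=2e^{-t}-1,\quad R(t)=3e^{-t}+1.
\]
We have $\Phi^{t,s}(K)=e^{-(t-s)}K+\overline B_{1-e^{-(t-s)}}(0)$, and the Minkowski sum of $\{a\le|x|\le b\}$ with $\overline B_c(0)$ is $\{\max(a-c,0)\le|x|\le b+c\}$. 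Since the radii solve $\dot r=-r-1$ and $\dot R=-R+1$, one checks that $\Phi^{t,s}(M_s)=M_t$ for all $s<t$, and each $\partial M_t$ is one or two circles. Now take $x_0=(1,0)$ on the inner circle of $\partial M_0$. Every solution from $x_0$ satisfies $|x(t)|\le e^{-t}+(1-e^{-t})=1$. For $t\ge\ln 2$, however, $\partial M_t$ is the single circle of radius $R(t)>1$. So no admissible $\xi$ works, and any correct proof needs an additional hypothesis.

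In this example your local step actually goes through. On the inner circle $n=-x/|x|$, and the boundary moves with normal speed $r+1=\langle f,n\rangle+\rho$, so $f+\rho n$ is tangent to $\mathcal G$ at every point of $\mathcal G$. The viable solution is $x(t)=(2e^{-t}-1,0)$, and it leaves through $(\ln 2,0)\in\overline{\mathcal G}\setminus\mathcal G$. Global existence of solutions only excludes blow-up, not exit from a non-closed constraint set. The fallback breaks at the same place: invariance only gives preimages in $\partial M_t$ of points of $\partial M_{t+h}$, never an arc issuing from a prescribed $x_0$. That surjectivity is exactly what fails. (Separately, Haddad's theorem needs upper semicontinuity in $t$ and a forward contingent condition; your backward-trajectory equality does not give the latter without time regularity of the tube.)

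If you add a hypothesis such as connectedness of $\partial M_{t_0}$, a short argument avoids viability theory altogether. Let $\Pi_t:\partial M_t\to\partial M_{t_0}$ send $p$ to the $x$-component at time $t_0$ of the solution of \eqref{eq:boundary-flow} through $(p,n(p))$ at time $t$. It is continuous because the normal field is continuous. It is injective by Proposition \ref{prop:backward-invariance}, uniqueness of normals, and uniqueness for \eqref{eq:boundary-flow}. By invariance of domain its image is open and closed, hence all of $\partial M_{t_0}$. The arc through $x_0$ is then the solution from $(x_0,n(x_0))$, which is independent of $t$ and stays on the boundary for all $t>t_0$.

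This is the sound form of the paper's homeomorphism argument. The paper's version has the same loophole: in the example $G_0$ is the outer circle, a union of components rather than a manifold with boundary, so no contradiction arises.
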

\begin{proof}
Fix $t_0\in\R$, and let $G_{t_0}$ be the set defined by 
\[
G_{t_0}:=\{x_0\in\partial M_{t_0}\mid x(t,t_0,x_0,\xi)\in\partial M_t\text{ for all }t>t_0, \text{ for some }
\xi\in L^\infty,\, \|\xi\|_\infty\le1\}.
\]
Note that this set is closed (and therefore compact) because its complementary set $B_{t_0}=\partial M_{t_0}\setminus G_{t_0}$ is open. We aim to show that $B_{t_0}=\varnothing$. Fix $t>t_0$ and consider the function 
\[
F:G_{t_0}\to\partial M_t, \quad x_0\mapsto x(t,t_0,x_0,\xi)
\]
Note that $F$ is surjective because $\M$ is invariant by assumption, and it is not possible to reach the boundary from the interior in finite time thanks to Proposition~\ref{prop:trajectories-on-boundary}.
Moreover, $F$ is  injective. Indeed,  let $x_1,x_2\in G_{t_0}$ such that $x(t,t_0,x_1,\xi_1)=x(t,t_0,x_2,\xi_2)=:p\in \partial M_t$. However, since $\M$ has (fibre-wise) continuously differentiable boundary by assumption, there is a unique $n\in\mathbb{S}^{d-1}$ such that $n\bot T_p\partial M_t$ outward directed. Then, the backward invariance of the outward unit normal bundle proven in Theorem \ref{prop:backward-invariance} and the uniqueness of solutions for the Cauchy problem \eqref{eq:boundary-system} with initial value $(p,n)$ at time $t$ guarantee that $x(s,t_0,x_1,\xi_1)=x(s,t_0,x_2,\xi_2)$ for all $s\in[t_0,t]$ and thus $x_1=x_2$. Additionally, $F$ is continuous, due to continuous variation of solutions. Hence, $F$ is a continuous bijection from a compact space into an Hausdorff space and therefore it is also an homeomorphism. Therefore, $B_{t_0}$ must be empty because otherwise, $F$ would be an homeomorphism between a manifold with boundary and one without, which is a contradiction.
\end{proof}

We are therefore ready to state and prove the following  result  on the invariance of the outaward unit normal bundle of an invariant set with respect to the two-parameter semigroup induced by the boundary system \eqref{eq:boundary-system}, as it was originally conjectured in~\cite{Tey2022minimal}.

 \begin{thm}\label{thm:BM-NAODE-inv}
     Consider an invariant set $\M=(M_t)_{t\in\R}$ for the set-valued semigroup $\Phi^{t,t_0}$ with respect to $\U$. Then, the following statements hold true.
     \begin{itemize}[leftmargin=18pt]
         \item[(i)] The outward unit normal bundle $N^+_1\partial \M$ is backward invariant under the two-parameter semigroup induced by the boundary system: 
         \begin{equation}\label{eq:boundary-flow}
     \begin{cases}
    \dot x=f(t,x)+\rho\,n,\\
    \dot n=-D_xf(t,x)^\top n+\langle n,D_xf(t,x)^\top n\rangle\, n.
\end{cases}
     \end{equation}
     \item[(ii)] If additionally $\M$ has (fibre-wise) continuously differentiable boundary, then the outward unit normal bundle $N^+_1\partial \M$ is invariant under the two-parameter semigroup induced by the boundary system \eqref{eq:boundary-flow}.
     \end{itemize}
 \end{thm}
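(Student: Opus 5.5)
Part (i) is essentially a restatement of Proposition \ref{prop:backward-invariance}. The plan is to recall that result and then check that the two-parameter semigroup it uses coincides with the one generated by the boundary system \eqref{eq:boundary-flow}. Concretely, for a solution $(x,\eta)$ of the unnormalised Pontryagin system $\dot x=f(t,x)+\rho\,\eta/|\eta|$, $\dot\eta=-D_xf(t,x)^\top\eta$, the normalisation $n=\eta/|\eta|$ solves the second equation of \eqref{eq:boundary-flow}. This is the same quotient-rule computation as in Step 2 of Proposition \ref{prop:boundary-trajectories-PMP}, now with the $f_0$-term absent, giving $\dot n=-D_xf^\top n+\langle n,D_xf^\top n\rangle n$. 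Moreover, $\eta$ never vanishes, since it solves a linear equation with a nonzero initial condition. By uniqueness of Carath\'eodory solutions of \eqref{eq:boundary-flow}, whose right-hand side is locally Lipschitz in $(x,n)$ with $L^1_{loc}$ constants by the standing assumptions on $D_xf$, the two flows agree on $T_1\R^d$. Hence every trajectory of \eqref{eq:boundary-flow} issued from $(\overline x,\overline n)\in N^+_1\partial M_t$ stays in $N^+_1\partial M_\tau$ for $\tau<t$, which is backward invariance.

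For (ii), assume $\partial M_t$ is $C^1$ for every $t$. Then the Mordukhovich cone at each $p\in\partial M_t$ reduces to the single outward unit normal $\nu_t(p)$, so $N^+_1\partial M_t$ is the graph of $p\mapsto\nu_t(p)$. Backward invariance is given by (i), so only forward invariance remains. Fix $t_0$ and $(x_0,\nu_{t_0}(x_0))$. By Proposition \ref{prop:boundary-forward-invariant} there is an admissible $\xi$ with $x^*(t):=x(t,t_0,x_0,\xi)\in\partial M_t$ for all $t>t_0$. Fix any $t>t_0$ and consider the point $(x^*(t),\nu_t(x^*(t)))\in N^+_1\partial M_t$. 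Apply (i) to it, run backward to time $t_0$, and use the uniqueness argument from the proof of Proposition \ref{prop:boundary-forward-invariant} (injectivity of $F$). This shows that the backward boundary trajectory from $(x^*(t),\nu_t(x^*(t)))$ is exactly $x^*$ on $[t_0,t]$, with normal component $n(\tau)=\nu_\tau(x^*(\tau))$. In particular $n(t_0)=\nu_{t_0}(x_0)$, because the normal at $x_0$ is unique.

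Reversing time, uniqueness for \eqref{eq:boundary-flow} then shows that the forward solution from $(x_0,\nu_{t_0}(x_0))$ at time $t_0$ equals $(x^*(\tau),\nu_\tau(x^*(\tau)))$ for $\tau\in[t_0,t]$, and this lies in $N^+_1\partial M_\tau$. Since $t$ was arbitrary, forward invariance follows, and together with (i) we get invariance.

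The main obstacle is the identification step: showing that the backward boundary trajectory from time $t$ lands exactly on the chosen $x_0$, rather than on some other preimage. This is where I would lean on injectivity of the time-$t$ map $F$ from Proposition \ref{prop:boundary-forward-invariant}, which in turn relies on the uniqueness of the outward normal in the $C^1$ case. I would also check carefully that the uniqueness claimed for \eqref{eq:boundary-flow} holds under the Carath\'eodory hypotheses. Since $D_xf$ is only assumed continuous in $x$, the $n$-equation may fail to be Lipschitz in $x$. If so, I would instead argue through the linear adjoint equation along the fixed trajectory $x^*$, which has unique solutions, and normalise afterwards.
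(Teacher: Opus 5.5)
Your part (i) matches the paper, which simply cites Proposition~\ref{prop:backward-invariance}. Your explicit normalisation check and your remark that $\eta$ never vanishes are welcome additions.

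For (ii) you take a genuinely different route. The paper argues forward from $t_0$. It takes the boundary trajectory given by Proposition~\ref{prop:boundary-forward-invariant} and uses Proposition~\ref{prop:boundary-trajectories-PMP} to say that it solves \eqref{eq:boundary-flow}. It then reruns the parallel-displacement argument of Proposition~\ref{prop:backward-invariance} forward in time, so that the normal component stays orthogonal to the transported tangent hyperplane.

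You never transport anything forward. You pull back from an arbitrary later time $t$, using (i) as a black box. You identify the pulled-back curve with $x^*$ through the injectivity step of Proposition~\ref{prop:boundary-forward-invariant}. You then fix $n(t_0)=\nu_{t_0}(x_0)$ by uniqueness of the $C^1$ normal, and conclude by uniqueness of the Cauchy problem.

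What this buys is a reduction of forward invariance to backward invariance. It also makes explicit that the tracked solution really starts at $(x_0,\nu_{t_0}(x_0))$; the paper's proof never checks that the control of the forward boundary trajectory starts at $\overline n$.

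The cost, as you correctly flag, is that everything rests on the injectivity step. That step itself presupposes that every boundary trajectory ending at $p$ is the projection of the boundary-system solution through $(p,\nu_t(p))$. This is the same identification of the control with the outward normal that the paper's forward argument needs, now made at the terminal rather than the initial time. So the two proofs rest on the same underlying fact, packaged differently.

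One correction. Your first-paragraph claim that \eqref{eq:boundary-flow} is locally Lipschitz in $(x,n)$ does not follow from the standing hypotheses, since $D_xf$ is only assumed continuous in $x$. The paper tacitly presupposes uniqueness for \eqref{eq:boundary-flow}: it speaks of the semigroup the system induces and invokes uniqueness in Proposition~\ref{prop:boundary-forward-invariant}. Your step (ii) needs uniqueness of the coupled system both in the injectivity step and in the final time-reversal. Your fallback, via the linear adjoint along a fixed $x^*$, only gives uniqueness of $n$ given $x$, so it does not supply this.
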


\begin{proof} 
The backward invariance is shown in Proposition \ref{prop:backward-invariance}. If $\M$ has (fibre-wise) continuously differentiable boundary, then Propositions \ref{prop:boundary-forward-invariant} and \ref{prop:boundary-trajectories-PMP} together with the argument for the parallel displacement of the tangent space presented in  Proposition \ref{prop:backward-invariance}, guarantee that the solution $\big(x^*(\tau),n^*(\tau)\big)$ to \eqref{eq:boundary-flow} with initial condition  $(\overline x, \overline n)\in N^+_1 M_{t_0}$ satisfies $\big(x^*(\tau),n^*(\tau)\big)\in  N^+_{1}\partial M_{\tau}$ for almost all $\tau > t_0$, which concludes the proof.
\end{proof}

\section{Minimal attractors of nonautonomous linear differential inclusions}
In this section, we turn our attention to the case of nonautonomous differential inclusions induced by linear systems. We show that, under the assumption of exponential stability, there is a (unique) uniformly bounded minimal invariant set for the set-valued system which is both forward and pullback attracting, and we study some of its geometric and topological properties including simmetry, convexity, closedness and smoothness of the boundary. Then, we consider the boundary system for such minimal invariant set. Hence, we introduce the skew-product formalism and show that the outward unit normal bundle is in fact the pullback attractor for the skew-product flow induced by the boundary system. The pullback attractor induces a global attractor for the skew-product flow under the classic assumption of a compact flow on the base. Finally, we apply this results to a concrete example showing how the deterministic nature of the boundary system, together with the property of attractivity of the outward unit normal bundle allow to numerically approximate the boundary of the attractor of the set-valued system by simply carrying out a Runge-Kutta-type integration.

\subsection{General properties and existence of an attractor}
Consider the nonautonomous linear ordinary differential equations, 
\begin{equation}\label{eq:linearNAODEwNOISE}
    \dot x =L(t)x+ \rho \xi(t)\quad t\in\R,\, x\in\R^d,\, \xi_t \in \overline B_1(0),
\end{equation}
where $ L:\R\to\R^{d\times d}$ is a locally integrable function and the functions $\xi:\R\to\R^d$ belong to the space $\U=\{\xi\in L^\infty(\R,\R^d)\mid \|\xi\|_\infty\le1\}$.  Our first observation attains the continuity of the map $x\mapsto \Phi^{t,t_0}(x)$.
\begin{lem}
    The set-valued map $x\mapsto \Phi^{t,t_0}(x)$ induced by \eqref{eq:linearNAODEwNOISE} is continuous.
\end{lem}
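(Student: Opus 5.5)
The plan is to exploit linearity through the variation of constants formula. Let $\Psi(t,t_0)$ be the principal fundamental matrix of $\dot x=L(t)x$, which exists and is continuous in $(t,t_0)$ because $L$ is locally integrable. Every Carathéodory solution of \eqref{eq:linearNAODEwNOISE} is then
\[
x(t,t_0,x_0,\xi)=\Psi(t,t_0)x_0+\rho\int_{t_0}^t\Psi(t,s)\xi(s)\,ds .
\]
Consequently the reachable set takes the form
\[
\Phi^{t,t_0}(x_0)=\Psi(t,t_0)x_0+R(t,t_0),
\]
where
\[
R(t,t_0):=\Big\{\rho\int_{t_0}^t\Psi(t,s)\xi(s)\,ds\ \Big|\ \xi\in\U\Big\}
\]
does not depend on $x_0$. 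In other words, $\Phi^{t,t_0}(x_0)$ is a translate of one fixed set.

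First I will check that $R(t,t_0)$ is a nonempty compact subset of $\R^d$. Boundedness is immediate from the bound $\rho\int_{t_0}^t\|\Psi(t,s)\|\,ds$. For closedness I will use either of two arguments. The first is weak-$*$ compactness of $\U$ in $L^\infty([t_0,t],\R^d)$, together with the observation that $\xi\mapsto\int_{t_0}^t\Psi(t,s)\xi(s)\,ds$ is weak-$*$ continuous, since each row of $\Psi(t,\cdot)$ lies in $L^1$. The second is simply to cite the compactness of values already recalled in Section~\ref{sec:boundary-mapping-continuous}.

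The core step is then the estimate
\[
d_h\big(\Phi^{t,t_0}(x),\Phi^{t,t_0}(y)\big)\le \|\Psi(t,t_0)\|\,|x-y|.
\]
It follows directly from the general fact that for a compact set $R$ and vectors $a,b$ one has $d_h(a+R,b+R)\le|a-b|$: each point $a+r$ lies within distance $|a-b|$ of $b+r$, and the same holds with the roles of $a$ and $b$ exchanged. This shows that $x\mapsto\Phi^{t,t_0}(x)$ is Lipschitz with respect to the Hausdorff metric. Hausdorff continuity of a compact-valued map is equivalent to upper plus lower semicontinuity, which settles continuity in whichever sense is intended.

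I do not expect a real obstacle. The only point needing care is justifying the variation of constants formula in the Carathéodory setting with merely locally integrable $L$. I would handle it by noting that $\Psi$ is absolutely continuous and invertible, and that $\frac{d}{ds}\big(\Psi(t_0,s)x(s)\big)=\Psi(t_0,s)\rho\,\xi(s)$ for almost every $s$, which integrates to the formula.
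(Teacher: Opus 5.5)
Your proof is correct. It differs from the paper's mainly in how upper semicontinuity is obtained.

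The paper cites upper semicontinuity of $x\mapsto\Phi^{t,t_0}(x)$ from the general theory of differential inclusions and proves only lower semicontinuity by hand. Given $x_\mu\to x_0$ and $y_0\in\Phi^{t,t_0}(x_0)$ realised by a control $\xi$, it keeps $\xi$ fixed and notes that $\Psi(t,t_0)x_\mu+\rho\int_{t_0}^t\Psi(t,s)\xi(s)\,ds\to y_0$. That is exactly one half of your estimate, namely $a+R\subset\overline B_{|a-b|}(b+R)$.

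By writing $\Phi^{t,t_0}(x_0)=\Psi(t,t_0)x_0+R(t,t_0)$ as a translate of a single set, you get both halves at once. This yields the quantitative bound $d_h\big(\Phi^{t,t_0}(x),\Phi^{t,t_0}(y)\big)\le\|\Psi(t,t_0)\|\,|x-y|$. Upper semicontinuity then follows from compactness of $R(t,t_0)$, which your weak-$*$ argument supplies directly.

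What each route buys:
\begin{itemize}
\item Your route is self-contained and gives a stronger conclusion: Lipschitz continuity in the Hausdorff metric. The constant is at most $Ke^{-\gamma(t-t_0)}$ once exponential stability is assumed, which matches the estimates used later for attractivity. The cost is that you must verify compactness of $R(t,t_0)$ explicitly.
\item The paper's route is shorter. It relies on an external upper semicontinuity result that holds for general nonlinear Carath\'eodory right-hand sides, and uses linearity only for the lower semicontinuous part.
\end{itemize}
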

\begin{proof}
    Firstly, note that $x\mapsto \Phi^{t,t_0}(x)$ is upper semi-continuous 
(\cite[Corollary~7.1]{deimling2011multivalued}
 and \cite{aubin1984differential}). In order to prove the lower semi-continuity, consider any generalised sequence $(x_\mu)$ converging to some $x_0$ and any $y_0\in \Phi^{t,t_0}(x_0)$. Then, there exists $\xi_0\in\U$ such that 
\[
y_0=\Psi(t,t_0)x_0+\int_{t_0}^t\Psi(t,s)\xi(s)\,ds.
\]
Therefore, the generalised sequence defined by
\[
y_\mu=\Psi(t,t_0)x_\mu+\int_{t_0}^t\Psi(t,s)\xi(s)\,ds,
\]
is such that $y_\mu\in \Phi^{t,t_0}(x_\mu)$ and additionally  $(y_\mu)$ converges to  $y_0$. As a consequence, the map $x\mapsto \Phi^{t,t_0}(x)$ is also lower semicontinuous and therefore continuous.
\end{proof}

In order to have a dynamically relevant minimal invariant set for the set-valued dynamical system associated to \eqref{eq:linearNAODEwNOISE}, we are going to assume that the linear homogeneous system \begin{equation}\label{eq:linearNAODE}
\dot x=L(t)x,    
\end{equation} 
is uniformly asymptotically stable. This is equivalent to having exponential stability of the trivial solution (see for example \cite[Theorem~4.4.2]{adrianova1995introduction}), i.e.~there are constants $K\ge1$ and $\gamma>0$ such that,
\[
\|\Psi(t,s)\|\le Ke^{-\gamma(t-s)},\quad\text{for all }t\ge s,
\]
where $\Psi(t,s)$ is the principal matrix solution of \eqref{eq:linearNAODE} at $s\in\R$. We shall say that \eqref{eq:linearNAODE} is exponentially stable.

\begin{thm}\label{thm:exist-attractor}
    Assume that~\eqref{eq:linearNAODE} is exponentially stable. Then the set $\A=( A_t)_{t\in\R}$ defined by    \begin{equation}\label{eq:hyp_sols_lin}
         A_t=\left\{x\in\R^d\mid x=\rho\int_{-\infty}^t\Psi(t,s)\xi(s)\,ds\text{ for }\xi\in\U\right\},
    \end{equation}
    is uniformly bounded and it is the unique minimal invariant set for the two-parameter semigroup $\Phi$ induced by the differential inclusion $\dot x\in \overline B_\rho(L(t)x)$. Moreover, for every compact set $ K\subset\R^d$ we have that 
    \[
    \lim_{t_0\to-\infty} h(\Phi^{t,t_0}(K), A_t)=0,\qquad \text{for all } t\in\R, 
    \]
    and 
    \[
    \lim_{t\to\infty} h(\Phi^{t,t_0}(K), A_t)=0,\qquad \text{for all } t_0\in\R, 
    \]
    that is, $ \A$ pullback and forward attracts every compact subset of $\R^d$.
\end{thm}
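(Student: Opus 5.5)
We will work throughout with the variation of constants formula
\[
x(t,t_0,x_0,\xi)=\Psi(t,t_0)x_0+\rho\int_{t_0}^t\Psi(t,s)\xi(s)\,ds ,
\]
so that $\Phi^{t,t_0}(K)=\Psi(t,t_0)K+R_{t,t_0}$, where $R_{t,t_0}:=\{\rho\int_{t_0}^t\Psi(t,s)\xi(s)\,ds\mid \xi\in\U\}$.

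\textbf{Boundedness.} The exponential bound gives
\[
\Big|\rho\int_{-\infty}^t\Psi(t,s)\xi(s)\,ds\Big|\le \rho K/\gamma ,
\]
so the integrals defining $A_t$ converge absolutely and $A_t\subset\overline B_{\rho K/\gamma}(0)$ uniformly in $t$.

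\textbf{Compactness.} The set $\U$ is weak-$*$ compact in $L^\infty$. The map $\xi\mapsto\int_{-\infty}^t\Psi(t,s)\xi(s)\,ds$ is weak-$*$ continuous, because each row of $s\mapsto \Psi(t,s)\mathds 1_{(-\infty,t]}(s)$ lies in $L^1$ by the exponential bound. Hence $A_t$, and likewise $R_{t,t_0}$, is compact, and convex as a linear image of a convex set.

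\textbf{Invariance.} For $\xi\in\U$ and $t>t_0$, the cocycle property $\Psi(t,t_0)\Psi(t_0,s)=\Psi(t,s)$ gives
\[
\Psi(t,t_0)\int_{-\infty}^{t_0}\Psi(t_0,s)\xi\,ds+\int_{t_0}^t\Psi(t,s)\xi\,ds=\int_{-\infty}^t\Psi(t,s)\xi\,ds .
\]
This yields $\Phi^{t,t_0}(A_{t_0})\subseteq A_t$. For the reverse inclusion, given $x\in A_t$ generated by $\xi$, take $x_0\in A_{t_0}$ generated by the same $\xi$ restricted to $(-\infty,t_0]$; the same identity then shows $x=x(t,t_0,x_0,\xi)$.

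\textbf{Attraction.} For a compact $K$, the identity above gives
\[
d_h\big(\Phi^{t,t_0}(K),A_t\big)\le \|\Psi(t,t_0)\|\sup_{y\in K}|y|+d_h\Big(\rho\int_{t_0}^t\Psi(t,s)\xi\,ds,\ \rho\int_{-\infty}^t\Psi(t,s)\xi\,ds\Big).
\]
In the second term the Hausdorff distance is between the two sets obtained as $\xi$ ranges over $\U$, and it is estimated by pairing elements generated by the same $\xi$. Their difference is
\[
\rho\,\Psi(t,t_0)\int_{-\infty}^{t_0}\Psi(t_0,s)\xi\,ds ,
\]
whose norm is at most $Ke^{-\gamma(t-t_0)}\rho K/\gamma$. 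Consequently
\[
d_h\big(\Phi^{t,t_0}(K),A_t\big)\le Ke^{-\gamma(t-t_0)}\big(\sup_K|y|+\rho K/\gamma\big).
\]
This tends to $0$ both as $t_0\to-\infty$ with $t$ fixed (pullback) and as $t\to\infty$ with $t_0$ fixed (forward).

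\textbf{Minimality and uniqueness.} I expect this step to be the main obstacle, chiefly because the class of competitors must be specified. Let $\mathscr B=(B_t)$ be any nonempty compact invariant set. For the inclusion $A_t\subseteq B_t$, fix $b\in B_{t_0}$ and use $B_t\supseteq \Phi^{t,t_0}(\{b\})=\Psi(t,t_0)b+R_{t,t_0}$. The set $R_{t,t_0}$ converges to $A_t$ in the Hausdorff metric as $t_0\to-\infty$, by the tail estimate above with $K=\{0\}$. The term $\Psi(t,t_0)b$ tends to $0$ provided $\sup_{t_0\le t}\sup_{b\in B_{t_0}}|b|$ is finite, for instance when $\mathscr B$ is uniformly bounded, or more generally tempered with growth rate below $\gamma$. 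Since $B_t$ is closed, this yields $A_t\subseteq B_t$. For the opposite inclusion, pullback attraction applied to $B_{t_0}$ (again under the uniform bound) gives
\[
B_t=\Phi^{t,t_0}(B_{t_0})\to A_t \quad (t_0\to-\infty),
\]
so $B_t\subseteq A_t$. Every uniformly bounded invariant set therefore equals $\A$, which gives both minimality and uniqueness.

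I will therefore state explicitly that uniqueness holds among uniformly bounded invariant sets. Without such a restriction the claim can fail: in the scalar case $\dot x=-x+\rho\xi$, the fibres $[-\rho-e^{-t},\rho+e^{-t}]$ form an invariant set that is unbounded in the past. This uniform-boundedness hypothesis is precisely the hypothesis needed in the minimality step.
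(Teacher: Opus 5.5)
Your proof is correct. For invariance, uniform boundedness and attraction it follows the paper's route: variation of constants and splitting the integral at $t_0$. Your single estimate $d_h(\Phi^{t,t_0}(K),A_t)\le Ke^{-\gamma(t-t_0)}(\sup_K|y|+\rho K/\gamma)$ is cleaner than the paper's, which bounds only one semi-distance and treats the two limits separately. One small point: in proving $\Phi^{t,t_0}(A_{t_0})\subseteq A_t$, the initial point and the later control are chosen independently. You should therefore say explicitly that the identity is applied to the concatenated control $\zeta\mathds{1}_{(-\infty,t_0]}+\xi\mathds{1}_{(t_0,t]}$, as the paper does.

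The genuine differences lie elsewhere. First, you get compactness of $A_t$ directly from Banach--Alaoglu. The paper does not address compactness in this proof and proves closedness only later, by a Pontryagin-type argument. You need compactness both for $\A$ to qualify as an invariant set and for the step $B_t\subseteq A_t$. Second, for minimality the paper argues by contradiction on a closed invariant $\mathcal S\subsetneq\A$. It steers a point of $S_s$ with the control $\zeta$ from the far past into an $\ep$-ball around $x_\zeta(t_0)$ that misses $S_{t_0}$. You apply the same contraction estimate directly to an arbitrary uniformly bounded closed invariant $\mathscr B$ to get $A_t\subseteq B_t$, and add $B_t\subseteq A_t$ from pullback attraction. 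This yields strictly more: $\A$ is the only uniformly bounded invariant set, giving minimality and uniqueness in that class. The paper never actually argues uniqueness.

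Your caveat is right, but your example does not illustrate it. The family $([-\rho-e^{-t},\rho+e^{-t}])_t$ contains $\A$, so it is not minimal and says nothing about uniqueness of \emph{minimal} sets. Use a translate $(\Psi(t,0)c+A_t)_{t\in\R}$ with $c\neq 0$, e.g.\ $([ce^{-t}-\rho,\,ce^{-t}+\rho])_t$ in your scalar example:
\begin{itemize}
\item by linearity, since $\Phi^{t,t_0}(E)=\Psi(t,t_0)E+R_{t,t_0}$, it is invariant;
\item shifting any closed invariant subset of it by $-\Psi(t,0)c$ gives a closed invariant subset of $\A$, which must equal $\A$, so the translate is minimal;
\item it differs from $\A$, because $|\Psi(t,0)c|\ge K^{-1}e^{-\gamma t}|c|\to\infty$ as $t\to-\infty$.
\end{itemize}
So the word ``unique'' in the statement genuinely requires the uniform-boundedness (or temperedness with rate below $\gamma$) qualifier you add.
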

\begin{proof}
     For any fixed $\xi\in\U$, the non-homogeneous linear problem \eqref{eq:linearNAODEwNOISE} has a unique uniformly attracting (hyperbolic) solution defined by 
    \[
    x_\xi(t)=\rho\int_{-\infty}^t\Psi(t,s)\xi(s)\,ds,
    \]
    obtained as the pullback limit of any solution to \eqref{eq:linearNAODEwNOISE} written through the variation of constants formula. We shall firstly prove the invariance of $\A$, i.e.~the equality $\Phi^{t,t_0}(A_{t_0})= A_t$ for all $t_0<t$.

    $(\subset)$ Consider $a\in\Phi^{t,t_0}(A_{t_0})$. Then, there is $\zeta\in\U$ such that 
    \[
    \begin{split}
    a&\in\Phi^{t,t_0}\bigg(\rho\int_{-\infty}^{t_0}\Psi(t_0,s)\zeta(s)\,ds\bigg)\\
    &=\bigcup_{\xi\in\U}\Big[\Psi(t,t_0)\rho\int_{-\infty}^{t_0}\Psi(t_0,s)\zeta(s)\,ds+\rho\int_{t_0}^t\Psi(t,s)\xi(s)\,ds\Big]\\
    &=\bigcup_{\xi\in\U}\Big[\rho\int_{-\infty}^{t_0}\Psi(t,s)\zeta(s)\,ds+\rho\int_{t_0}^t\Psi(t,s)\xi(s)\,ds\Big]\\
    &=\bigcup_{\xi\in\U}\Big[\rho\int_{-\infty}^{t}\Psi(t,s)\big(\zeta(s)\mathds{1}_{(-\infty,t_0)}(s)+\xi(s)\mathds{1}_{(t_0,t)}(s)\big)\,ds\Big],
     \end{split}
    \]
    where $\mathds{1}_I(s)$ is the indicator function for $I\subset\R$. Note that $\zeta(s)\mathds{1}_{(-\infty,t_0)}(s)+\xi(s)\mathds{1}_{(t_0,t)}(s)\in\U$ and therefore the right-hand side of the previous chain of equalities is contained in $ A_t$.

    $(\supset)$ Let $t_0,t\in\R$ with $t_0<t$ and $a\in  A_t$. Then, there is $\zeta\in\U$ such that 
    \[
    \begin{split}
    a= x_\zeta(t)&=\rho\int_{-\infty}^{t}\Psi(t,s)\zeta(s)\,ds=\rho\int_{-\infty}^{t_0}\Psi(t,s)\zeta(s)\,ds+\rho\int_{t_0}^{t}\Psi(t,s)\zeta(s)\,ds\\
    &=\Psi(t,t_0)\rho\int_{-\infty}^{t_0}\Psi(t_0,s)\zeta(s)\,ds+\rho\int_{t_0}^t\Psi(t,s)\zeta(s)\,ds\\
    &\in \bigcup_{\xi\in\U}\Big[\Psi(t,t_0)\rho\int_{-\infty}^{t_0}\Psi(t_0,s)\zeta(s)\,ds+\rho\int_{t_0}^t\Psi(t,s)\xi(s)\,ds\Big]\\
    &=\Phi^{t,t_0}\bigg(\rho\int_{-\infty}^{t_0}\Psi(t_0,s)\zeta(s)\,ds\bigg)\subset\Phi^{t,t_0}(A_{t_0}).
    \end{split}
    \]

    We shall now prove the pullback and forward attractivity. Let $x_0\in\R^d$. Then, the definition of $\Phi^{t,s}$ and the variation of constants formula yield,
    \[
    \begin{split}
    h(\Phi^{t,t_0}(x_0), A_t)&= h\bigg(\bigcup_{\xi\in\U}\Big[\Psi(t,t_0)x_0 +\rho\int_{t_0}^t\Psi(t,s)\xi(s)\,ds\Big], \bigcup_{\xi\in\U}\rho\int_{-\infty}^t\Psi(t,s)\xi(s)\,ds\bigg)\\
    &=\sup_{\xi\in\U}\inf_{\zeta\in\U}\bigg|\Psi(t,t_0)x_0 +\rho\int_{t_0}^t\Psi(t,s)\xi(s)\,ds-\rho\int_{-\infty}^t\Psi(t,s)\zeta(s)\,ds\bigg|\\
    \end{split}
    \]
    Then the forward convergence is due to the fact that, for each $\xi\in\U$, $x_\xi$ is exactly the unique hyperbolic attracting solution of the inhomogeneous linear problem \eqref{eq:linearNAODEwNOISE}.
    In order to prove the pullback convergence, note that 
    \[
    \begin{split}
    &\sup_{\xi\in\U}\inf_{\zeta\in\U}\bigg|\Psi(t,t_0)x_0 +\rho\int_{t_0}^t\Psi(t,s)\xi(s)\,ds-\rho\int_{-\infty}^t\Psi(t,s)\zeta(s)\,ds\bigg|\\
    &\qquad\qquad\le |\Psi(t,t_0)|\,|x_0|+\sup_{\xi\in\U}\inf_{\zeta\in\U}\bigg|\rho\int_{t_0}^t\Psi(t,s)\xi(s)\,ds-\rho\int_{-\infty}^t\Psi(t,s)\zeta(s)\,ds\bigg|
    \end{split}
    \]
    and the right-hand side of the previous inequality tends to zero as $t_0\to\infty$ due to exponential stability of \eqref{eq:linearNAODE}. 
    
    We now show that $ \A$ is uniformly bounded. Note that for any $\xi\in\U$, any solution of \eqref{eq:linearNAODEwNOISE} satisfies 
    \[
    \begin{split}
    \bigg|\Psi(t,t_0)x_0 +\rho\int_{t_0}^t\Psi(t,s)\xi(s)\,ds\bigg|
    &\le |x_0|Ke^{-\gamma(t-t_0)}+\rho Ke^{-\gamma t}\int_{t_0}^te^{\gamma s}\sup_{\xi\in\U}|\xi(s)|\, ds \\  
    &\le |x_0|Ke^{-\gamma(t-t_0)}+\frac{\rho K \big(1-e^{-\gamma(t-t_0)}\big)}{\gamma},
    \end{split}
    \]
    and this holds uniformly in $\xi\in \U$. Therefore, for every bounded set $D\subset \R^d$, there is $T(D) >0$ such that if $t-t_0>T(D)$ then $h(\Phi^{t,t_0} (x_0), B_r(0))=0$ for any $x_0\in D$, where $r=1+\rho K/ \gamma>0$. Hence, it must be that $A_t\subset B_{r}(0)$ for all $t\in\R$, that is, $\A$ is uniformly bounded. \smallskip

 Finally, we show that     $\A$ is minimal with respect to the property of being invariant. Let $\mathcal{S}={S_t}$ be a nonempty closed invariant subset of $\A$ and assume by contradiction that $\mathcal{S}\subsetneq \A$. Then, there must exist $t_0\in\R$ and a function $\zeta\in L^\infty (\R,\R^d)$ with $\|\zeta\|_\infty\le 1$ exists such that 
\[
x_\zeta(t)=\rho\int_{-\infty}^{t}\Psi(t,s)\zeta(s)\,ds\notin S_t\qquad \text{for all } t<t_0,
\]
otherwise the invariance of $\mathcal{S}$ would already be contradicted. Moreover, since $\mathcal{S}$ is closed, there exists $\ep>0$ such that 
\begin{equation}\label{eq:21/01-17:22}
B_\ep(x_\zeta(t_0))\cap S_{t_0}=\varnothing.
\end{equation}
Consider $s<t_0-\frac{1}{\gamma}\log(\frac{2Kr}{\ep})$ where the constants $K\ge1$ and $\gamma>0$ are due to the exponential stability of $\dot x=L(t)x$ and the constant $r>0$ is the radius of the ball which contains every fiber of $\A$, thanks to the uniform boundedness shown above. Then, we have that, taking any point $p\in S_s$,
\[
|x_\zeta(t_0)-x(t,s,p,\zeta)|\le|\Psi(t_0,s)||x_\zeta(s)-p|\le K e^{-\gamma(t_0-s)}|x_\zeta(s)-p|<\ep.
\]
However, this means that $\Phi(t_0,s)p\cap B_\ep(x_\zeta(t_0))\ne \varnothing$, and thus $\Phi(t_0,s)S_s\cap B_\ep(x_\zeta(t_0))=S_{t_0}\cap B_\ep(x_\zeta(t_0))\ne\varnothing$, which contradicts \eqref{eq:21/01-17:22}. Therefore, the set $\A$ must be minimal invariant and the proof is complete.
\end{proof}

\begin{rmk}
    The assumption of exponential stability for \eqref{eq:linearNAODE} is equivalent to requesting that the dichotomy spectrum of $L(t)$ is strictly contained in the negative half-line. 
\end{rmk}

Next, we investigate the structure of the attractor's fibres studying various geometric and topological properties of the attractor. 

\begin{cor}[Symmetry with respect to the origin]\label{cor:symmetry}
     Assume that~\eqref{eq:linearNAODE} is exponentially stable and $U\subset\R^d$ is symmetric with respect to the origin. Then, the set $A_t$ obtained from \eqref{eq:linearNAODEwNOISE} with $\xi(t)\in U$ for a.e.~$t\in\R$ is symmetric with respect to the origin for every $t\in\R$.
\end{cor}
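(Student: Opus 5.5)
The plan is to use the explicit representation of the attractor fibres from Theorem \ref{thm:exist-attractor}, adapted to the noise set $U$:
\[
A_t=\left\{\rho\int_{-\infty}^t\Psi(t,s)\xi(s)\,ds \;\middle|\; \xi\in\U\right\},\qquad \U=\{\xi\in L^\infty(\R,\R^d)\mid \xi(s)\in U \text{ for a.e. } s\}.
\]
The point is that the map $\xi\mapsto \rho\int_{-\infty}^t\Psi(t,s)\xi(s)\,ds$ is linear, and the set of admissible noises is invariant under $\xi\mapsto-\xi$. We first check that the representation of Theorem \ref{thm:exist-attractor} still holds for a general bounded $U$ containing the origin. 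The proof of that theorem used only the variation of constants formula, the exponential bound $\|\Psi(t,s)\|\le Ke^{-\gamma(t-s)}$, and the fact that concatenations $\zeta\mathds{1}_{(-\infty,t_0)}+\xi\mathds{1}_{(t_0,t)}$ of admissible noises are admissible. All of these hold verbatim for $U$-valued noises. For the integral to converge it suffices that $U$ be bounded, so that $|\xi(s)|\le \sup_{u\in U}|u|$.

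Given this, fix $t\in\R$ and $a\in A_t$, and pick $\xi\in\U$ with $a=\rho\int_{-\infty}^t\Psi(t,s)\xi(s)\,ds$. Define $\tilde\xi:=-\xi$. This function is measurable and essentially bounded. Since $U=-U$, we have $\tilde\xi(s)\in U$ for almost every $s$, so $\tilde\xi\in\U$. Linearity of the integral gives
\[
\rho\int_{-\infty}^t\Psi(t,s)\tilde\xi(s)\,ds=-a,
\]
hence $-a\in A_t$. This shows $-A_t\subset A_t$. Applying the same argument to $-A_t$ gives the reverse inclusion, so $A_t=-A_t$ for every $t$.

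The only step needing some care is the first: making sure the characterisation of $A_t$ as the set of all "hyperbolic" solutions $x_\xi(t)$ transfers from the ball $\overline B_1(0)$ to a general symmetric bounded set $U$. I do not expect a real obstacle here, because the convexity and the specific shape of the ball were never used in the invariance computation. If one prefers not to revisit that proof, the statement can be restricted to the case the paper actually treats, $U=\overline B_1(0)$, which is trivially symmetric. Everything else is linearity.
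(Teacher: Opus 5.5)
Your proposal is correct and matches the paper's proof, which likewise deduces symmetry directly from the representation \eqref{eq:hyp_sols_lin}, linearity in $\xi$, and the invariance of the admissible noises under $\xi\mapsto-\xi$. Your extra check that Theorem \ref{thm:exist-attractor} carries over to a general bounded symmetric $U$ is a point the paper leaves implicit. Your justification is sound: it uses only variation of constants, the exponential bound, and the fact that $U$-valued noises are closed under concatenation.
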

\begin{proof}
    The result is a direct consequence of \eqref{eq:hyp_sols_lin} and the observation that, if $\xi(t)\in U$ then also $-\xi(t)\in U$.
\end{proof}

\begin{cor}[The attractor scales linearly with $\rho$] \label{cor:scale}
     Assume that~\eqref{eq:linearNAODE} is exponentially stable. Then, $A_t$ scales linearly with $\rho$ for all $t\in\R$.
\end{cor}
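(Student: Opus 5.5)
The plan is to read the claim off the explicit representation \eqref{eq:hyp_sols_lin} of the attractor given in Theorem \ref{thm:exist-attractor}. To make the dependence on the noise amplitude visible, write $A_t^\rho$ for the fibre obtained with amplitude $\rho>0$. The precise statement I will prove is
\[
A^\rho_t=\rho A^1_t\qquad\text{for all } t\in\R,\ \rho>0,
\]
or equivalently $A^{\lambda\rho}_t=\lambda A^\rho_t$ for every $\lambda>0$.

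Fix $t\in\R$. Since \eqref{eq:linearNAODE} is exponentially stable, each integral
\[
I_t(\xi):=\int_{-\infty}^t\Psi(t,s)\xi(s)\,ds, \qquad \xi\in\U,
\]
converges absolutely. Indeed, $|\Psi(t,s)\xi(s)|\le Ke^{-\gamma(t-s)}$, so the map $I_t$ is well defined on $\U$. Crucially, the principal matrix solution $\Psi$ depends only on $L$ and not on $\rho$.

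By \eqref{eq:hyp_sols_lin} we then have $A^\rho_t=\{\rho I_t(\xi)\mid \xi\in\U\}=\rho\,I_t(\U)$, where the index set $\U$ is also independent of $\rho$. Hence $A^\rho_t=\rho\,I_t(\U)=\rho A^1_t$. Both inclusions are immediate: given $x=\rho I_t(\xi)\in A^\rho_t$, the same $\xi$ gives $I_t(\xi)\in A^1_t$, and conversely.

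There is no genuine obstacle here. The only point needing care is to note that Theorem \ref{thm:exist-attractor} identifies the minimal invariant set with the set in \eqref{eq:hyp_sols_lin} for every $\rho>0$, with the same $K$ and $\gamma$. Consequently the uniqueness of the minimal invariant set transfers the scaling identity from this representation to the attractor itself. As a consistency check, the uniform bound $A_t\subset B_{\rho K/\gamma}(0)$ obtained in that proof, before the $+1$ margin, also scales linearly in $\rho$.
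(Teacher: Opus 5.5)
Your proof is correct and follows the same route as the paper. The paper's one-line argument is that $\rho$ enters \eqref{eq:hyp_sols_lin} only as a scalar prefactor, while $\Psi$ and $\U$ do not depend on $\rho$; your identity $A^\rho_t=\rho\,I_t(\U)=\rho A^1_t$ spells this out, including the absolute convergence of the integral.
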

\begin{proof}
    The result is an immediate consequence of the linearity in the parameter $\rho$ in \eqref{eq:hyp_sols_lin}.
\end{proof}

\begin{cor}[Convexity of the attractor's fibres]\label{cor:convexity}
    Assume that~\eqref{eq:linearNAODE} is exponentially stable.  Then, $\A$ is (fibre-wise) convex.
\end{cor}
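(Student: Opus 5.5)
The plan is to work directly with the explicit representation \eqref{eq:hyp_sols_lin} of the fibres from Theorem \ref{thm:exist-attractor} and to exploit two facts: the map $\xi\mapsto \rho\int_{-\infty}^t\Psi(t,s)\xi(s)\,ds$ is linear, and the set $\U=\{\xi\in L^\infty(\R,\R^d)\mid \|\xi\|_\infty\le1\}$ is convex. Once both are in hand, $A_t$ is the image of a convex set under a linear map, and convexity follows at once.

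Concretely, fix $t\in\R$ and take $a_1,a_2\in A_t$ together with $\lambda\in[0,1]$. By definition there are $\xi_1,\xi_2\in\U$ with
\[
a_i=\rho\int_{-\infty}^t\Psi(t,s)\xi_i(s)\,ds, \qquad i=1,2.
\]
I would then set $\xi_\lambda:=\lambda\xi_1+(1-\lambda)\xi_2$. This function is measurable and essentially bounded. For almost every $s$ we have $|\xi_\lambda(s)|\le \lambda|\xi_1(s)|+(1-\lambda)|\xi_2(s)|\le 1$, which uses convexity of the closed unit ball. Hence $\xi_\lambda\in\U$.

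It remains to pass the convex combination through the integral. The improper integral converges absolutely, because $\|\Psi(t,s)\|\le Ke^{-\gamma(t-s)}$ by the exponential stability of \eqref{eq:linearNAODE} and $|\xi_i|\le1$. So linearity of the Lebesgue integral applies and gives
\[
\lambda a_1+(1-\lambda)a_2=\rho\int_{-\infty}^t\Psi(t,s)\xi_\lambda(s)\,ds\in A_t .
\]
This proves convexity of every fibre.

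There is no real obstacle here. The only point needing care is justifying linearity of the improper integral, which is exactly what the absolute convergence provided by exponential stability secures, as above. I would also remark that the same argument works for any convex control set $U$ in place of $\overline B_1(0)$.
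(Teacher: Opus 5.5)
Your proof is correct and follows the paper's own argument: you write a convex combination of two points of $A_t$ as the integral against $\lambda\xi_1+(1-\lambda)\xi_2$, and you use the triangle inequality to see that this control stays in $\U$. The one thing you add is the explicit absolute-convergence justification via $\|\Psi(t,s)\|\le Ke^{-\gamma(t-s)}$, which the paper leaves implicit.
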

\begin{proof}
    Let us fix $t\in\R$, and consider $x,y\in A_t$ and the point $\alpha\,x+(1-\alpha) y$, with $\alpha\in [0,1]$. Due to \eqref{eq:hyp_sols_lin}, one has
    \[
    \alpha\,x+(1-\alpha) y= \rho\int_{-\infty}^t\Psi(t,s)\big(\alpha\,\xi_x(s)+(1-\alpha)\xi_y(s)\big)\,ds
    \]
    and since $|\alpha\,\xi_x(s)+(1-\alpha)\xi_y(s)|\le \alpha\,|\xi_x(s)|+(1-\alpha)|\xi_y(s)|\le 1$ for every $s\in \R$, then $\alpha\,\xi_x(\cdot)+(1-\alpha)\xi_y(\cdot)\in\U$. Hence, $\alpha\,x+(1-\alpha) y\in A_t$. Every fibre of $\A$ is therefore convex. 
\end{proof}

\begin{prop}[Closedness of the attractor's fibres]
     Assume that~\eqref{eq:linearNAODE} is exponentially stable.  Then, $\A$ is (fibre-wise) closed.
\end{prop}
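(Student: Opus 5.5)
We show that each fibre $A_t$ is closed by proving that it is the image of a weak-$*$ compact set under a weak-$*$ continuous linear map into $\R^d$. Fix $t\in\R$ and consider
\[
T_t:\U\to\R^d,\qquad T_t(\xi)=\rho\int_{-\infty}^t\Psi(t,s)\xi(s)\,ds ,
\]
so that $A_t=T_t(\U)$ by \eqref{eq:hyp_sols_lin}. Only the values of $\xi$ on $(-\infty,t]$ matter. We may therefore regard $\U$ as the closed unit ball of $L^\infty((-\infty,t],\R^d)=\big(L^1((-\infty,t],\R^d)\big)^*$, which is weak-$*$ compact by Banach--Alaoglu.

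The key step is the weak-$*$ continuity of $T_t$. Each component of $T_t(\xi)$ has the form $\int_{-\infty}^t\langle \Psi(t,s)^\top e_i,\xi(s)\rangle\,ds$, where $e_i$ is the $i$-th standard basis vector. Such a pairing is weak-$*$ continuous provided $s\mapsto\Psi(t,s)^\top e_i$ lies in $L^1((-\infty,t],\R^d)$. This follows from exponential stability:
\[
\|\Psi(t,s)\|\le Ke^{-\gamma(t-s)}, \qquad\text{so}\qquad \int_{-\infty}^t\|\Psi(t,s)\|\,ds\le K/\gamma<\infty .
\]
Measurability of $s\mapsto\Psi(t,s)$ is clear, since $\Psi(t,s)=\Psi(s,t)^{-1}$ is continuous in $s$ as the solution of a Carathéodory linear system.

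It follows that $A_t=T_t(\U)$ is the continuous image of a compact set, hence compact in $\R^d$, and in particular closed. If one prefers to avoid weak-$*$ topology on a non-separable space, a sequential argument works as well. Since $L^1$ is separable, the ball $\U$ is weak-$*$ sequentially compact. Given $x_n=T_t(\xi_n)\to x$, extract a subsequence with $\xi_{n_k}\rightharpoonup^*\xi$ and $\xi\in\U$; the ball is weak-$*$ closed, so pointwise bounds $|\xi(s)|\le1$ a.e.\ persist. Then $x=T_t(\xi)\in A_t$.

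The main obstacle is ensuring that the limit control is admissible, that is, $\|\xi\|_\infty\le 1$, together with the integrability of the kernel on the infinite interval. The first follows from weak-$*$ lower semicontinuity of the norm; the second is exactly where exponential stability enters. If $\U$ were defined with a non-convex set $U$, the weak-$*$ limit could leave $\U$. Here $\overline B_1(0)$ is convex and closed, so this does not occur.
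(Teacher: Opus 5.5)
Your proof is correct, but it takes a genuinely different route from the paper's. The paper adapts the argument of Theorem~1A on p.~164 of Lee--Markus, which is geometric and control-theoretic. It takes a boundary point $P_0$ of $\overline A_\tau$ at which a supporting hyperplane with outward normal $n$ meets $\overline A_\tau$ only at $P_0$. It then solves the adjoint equation $\dot\eta=-\eta L(t)$, $\eta(\tau)=n^\top$, and takes the maximising control $\overline u=\eta^\top/|\eta|$. It shows that the trajectory $\overline x(t)=\rho\int_{-\infty}^t\Psi(t,s)\overline u(s)\,ds$ maximises $\eta(t)x$ over $\overline A_t$, so that $P_0=\overline x(\tau)\in A_\tau$. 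Boundary points that are not exposed in this way are handled only by reference to the second half of the Lee--Markus proof.

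Your argument instead realises $A_t$ as the image of the weak-$*$ compact unit ball of $L^\infty((-\infty,t],\R^d)$ under a linear map with kernel $s\mapsto\Psi(t,s)$. Exponential stability is used exactly once, to put this kernel in $L^1$. This gives a complete, self-contained proof that treats all boundary points at once and yields compactness, not just closedness, in one step. It needs neither the convexity of $A_t$ (which the paper uses to have supporting hyperplanes) nor any infinite-horizon maximum principle. It also extends directly to any compact convex control set $U$.

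What the paper's route buys instead is structural information. It identifies the point of $\partial A_\tau$ with outward normal $n$ as the endpoint of the trajectory driven by the normalised adjoint, i.e.\ as the projection of a solution of the boundary system. That same mechanism is reused in the subsequent results on uniqueness of normals, strict convexity and $C^1$ regularity of $\partial A_t$.
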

\begin{proof}
The proof of this result closely follows arguments similar to the ones in Theorem 1A at page 164 of \cite{lee1986foundations}. On this account, we only highlight the most important changes in the simple case when a supporting hyperplane exists which intersects the boundary in a fibre in a unique point. Namely, fix $\tau \in\R$  and  consider $P_0\in\partial A_{\tau}$. 
    Suppose that there is a  supporting hyperplane $H$ to $\overline A_{\tau}$ at $P_0$, so that $H\cap \overline A_{\tau}=\{P_0\}$. 
    Let $n$ be the unit normal vector orthogonal to $H$ pointing outwards and consider the initial value problem for the adjoint system 
    \[
    \dot \eta =-\eta L(t),\qquad \eta(\tau)=n^\top,
    \]
    which yields the globally defined unique (row) solution $\eta(t)=n\Psi(t,\tau)^{-1}$. Recalling that Pontryagin Maximum Principle is sufficient and necessary for linear control systems such as \eqref{eq:linearNAODEwNOISE} with a convex cost function such as $f_0$ as defined in \eqref{eq:cost-function}, we have that a unique optimal control $\overline u(t)$ exists satisfying
    \[
    \eta(t)\overline u(t)=\max_{u\in \overline B_1(0)}\eta(t)u, \quad \text{for a.a. }t\in\R.
    \]
    In particular, since $u(t)\in \overline B_1(0)$ for all $t\in\R$, then it must be that $|\overline u(t)|=1$ for almost all $t\in\R$. Let $\overline x(t)$ be the solution to $\dot x =L(t)x+\rho\overline u(t)$ defined by
    \[
    \overline x(t)=\rho\int_{-\infty}^t\Psi(t,s)\overline u(s)\, ds,\quad t\in\R.
    \]
    Then, one has that (see also Lemmas 2A and 3A in \cite{lee1986foundations}), 
    \[
    \eta(t)\overline x(t)=\max_{x\in A_t}\eta(t) x=\max_{x\in \overline A_t}\eta(t) x,\quad \text{for all } t\in\R.
    \]
    Hence, $\overline x(\tau)\in H\cap\overline  A_\tau=\{P_0\}$, and thus $\overline x(\tau)=P_0\in A_\tau $. For the case when  $P_0\in\partial A_t$ fails to be the unique intersection of a supporting hyperplane with $A_\tau$ we refer the reader to the second part of the proof in Theorem 1A at page 164 of \cite{lee1986foundations} whose arguments can be followed exactly.\end{proof}

\begin{prop}
    Assume that~\eqref{eq:linearNAODE} is exponentially stable. Then, for any $t\in\R$, no two distinct points in $\partial A_t$ admit the same outward unit normal vector.
\end{prop}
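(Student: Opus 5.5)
Fix $t\in\R$ and an outward unit normal $n\in\mathbb{S}^{d-1}$. The plan is to show that the face of $A_t$ exposed by $n$, namely $F_n=\{x\in A_t\mid \langle n,x\rangle=\max_{y\in A_t}\langle n,y\rangle\}$, is a single point. This is exactly the claim, since any $p\in\partial A_t$ with outward normal $n$ lies in $F_n$. By Corollary \ref{cor:convexity} and the preceding proposition, $A_t$ is compact and convex, so the Mordukhovich normal cone coincides with the usual convex normal cone. Hence $n$ being an outward normal at $p$ means $\langle n,y-p\rangle\le 0$ for all $y\in A_t$, i.e.~$p\in F_n$.

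Next, compute the support function from \eqref{eq:hyp_sols_lin}. For $x=\rho\int_{-\infty}^t\Psi(t,s)\xi(s)\,ds$ we have
\[
\langle n,x\rangle=\rho\int_{-\infty}^t\langle \Psi(t,s)^\top n,\xi(s)\rangle\,ds\le\rho\int_{-\infty}^t|\Psi(t,s)^\top n|\,ds,
\]
with equality if and only if $\xi(s)=\Psi(t,s)^\top n/|\Psi(t,s)^\top n|$ for a.e.~$s<t$. The integral is finite by exponential stability. Here $\Psi(t,s)^\top n$ is the transpose of the adjoint solution $\eta(s)=n^\top\Psi(t,s)$ appearing in the closedness proof. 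It never vanishes, because $\Psi(t,s)$ is invertible. The maximiser $\overline\xi$ is therefore uniquely determined in $\U$ up to null sets.

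Now take $p,q\in\partial A_t$ sharing the outward normal $n$. Both lie in $F_n$, so $p=\rho\int\Psi(t,s)\xi_p\,ds$ and $q=\rho\int\Psi(t,s)\xi_q\,ds$, with $\xi_p,\xi_q$ both attaining the maximum. The a.e.~equality case of Cauchy--Schwarz, $\langle a,\xi\rangle=|a|$ with $|\xi|\le1$ and $a\ne0$, forces $\xi=a/|a|$. Hence $\xi_p=\xi_q=\overline\xi$ a.e., and therefore $p=q$. This is the PMP uniqueness used in the closedness proof, made explicit.

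The only delicate point is the pointwise equality argument: an integral inequality with equality gives equality almost everywhere only because the integrand difference $|\Psi^\top n|-\langle\Psi^\top n,\xi\rangle$ is nonnegative and integrable. I would state this explicitly. I would also note that the nonvanishing of $\Psi(t,s)^\top n$ is what rules out the degenerate case of zero costate, where uniqueness would fail.
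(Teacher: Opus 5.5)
Your proof is correct. It uses the same mechanism as the paper's: the normal $n$ fixes the adjoint $\eta(s)=n^\top\Psi(t,s)$, which never vanishes. That in turn fixes the control $\eta^\top/|\eta|$, which must generate any boundary point with that normal. The difference is how this is justified.

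The paper argues by contradiction. It invokes the Pontryagin Maximum Principle as a black box (``necessary and sufficient for linear systems'') to obtain a unique extremal control $\overline u$. It then introduces an auxiliary time $t_0<\tau$ and an initial state $x_0=\rho\int_{-\infty}^{t_0}\Psi(t_0,s)\overline u(s)\,ds$. It concludes by uniqueness of solutions of $\dot x=L(t)x+\rho\overline u(t)$, $x(t_0)=x_0$.

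You instead compute the support function $\max_{y\in A_t}\langle n,y\rangle=\rho\int_{-\infty}^t|\Psi(t,s)^\top n|\,ds$ directly from \eqref{eq:hyp_sols_lin}. You then use the a.e.\ equality case of Cauchy--Schwarz. This removes both the PMP citation and the auxiliary time $t_0$.

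What your route gains is self-containedness. It proves the step the paper asserts without comment: the trajectories reaching $\overline x_1$ and $\overline x_2$ must both be driven by the extremal control on all of $(-\infty,\tau)$, and so must pass through the same state at $t_0$. As a by-product you get an explicit formula $\rho\int_{-\infty}^t\Psi(t,s)\frac{\Psi(t,s)^\top n}{|\Psi(t,s)^\top n|}\,ds$ for the unique boundary point with normal $n$. You also get the strict convexity of Corollary~\ref{cor:linear-strictly-convex} at once.

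The paper's formulation has its own advantage. It stays in the adjoint/PMP language, which ties directly to the boundary system \eqref{eq:linear-boundary-map}. That language is also what carries over to the nonlinear setting of Section~\ref{sec:boundary-mapping-continuous}, where no closed-form support function exists.

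Your reduction from the Mordukhovich outward cone to the convex normal cone is sound. You only need the inclusion of the former in the latter, and it holds because $A_t$ is a convex body. It is in fact a more careful reading of ``outward normal'' than the paper's informal $n\perp T_{\overline x_i}\partial A_\tau$.
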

\begin{proof}
    Assume on the contrary that there are $\tau\in \R$ and  $\overline x_1, \overline x_2\in \partial A_\tau $, $\overline x_1\neq \overline x_2$ with the same unit normal vector $n\in\mathbb S^{d-1}$, i.e.~$n\, \bot\, T_{\overline x_1}\partial A_\tau$ and $n\, \bot\, T_{\overline x_2} \partial A_\tau$. Fix $t_0<\tau$ and $x_0\in\partial A_{t_0}$, and let $\eta(t)$ be the unique solution of the initial value problem
    \[
    \dot \eta =-\eta L(t), \quad \eta(\tau)=n.
    \]
    Recalling that Pontryagin Maximum Principle is sufficient and necessary for linear control systems such as \eqref{eq:linearNAODEwNOISE} with a convex cost function such as $f_0$ as defined in~\eqref{eq:cost-function}, we have that a unique extremal control $\overline u(t)$ exists satisfying
    \[
    \eta(t)u(t)=\max_{u\in B_1}\eta(t)u, \quad \text{for a.a. }t\in[t_0,t].
    \]
    Let $t_0<\tau$ and 
    \[
    x_0=\rho\int_{-\infty}^{t_0} \Psi(t_0,s)u(s)\,ds.
    \]
    Then, $x_0\in \partial A_{t_0}$ (see Theorem \ref{thm:exist-attractor} and the response functions $x_{1,2}(t)$ satisfying $x_1(\tau)=\overline x_1\neq\overline x_2=x_2(\tau)$ solve the initial value problem
    \[
    \dot x =L(t)x+\rho u(t),\quad x(t_0)=x_0\in\partial A_{t_0}.
    \]
    Due to the uniqueness of solutions, we have that $x_1(t)=x_2(t)$ for all $t\in[t_o,\tau]$, which is a contradiction and completes the proof.  
\end{proof}

\begin{cor}[Strict convexity of the attractor's fibres]\label{cor:linear-strictly-convex}
    Assume that~\eqref{eq:linearNAODE} is exponentially stable. Then, $A_t$ is strictly convex for every $t\in\R$.
\end{cor}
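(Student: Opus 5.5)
The plan is to derive strict convexity from the preceding results: fibre-wise convexity (Corollary~\ref{cor:convexity}), closedness (so each $A_t$ is a compact convex body), and the last Proposition, which says no two distinct boundary points of $A_t$ share an outward unit normal. Recall that a compact convex set is strictly convex if and only if its boundary contains no nondegenerate line segment.

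Before the main argument, note that $A_t$ has nonempty interior, so that ``boundary'' and ``normal'' are meaningful. To see this, take $\xi\equiv v$ constant on $(t-\delta,t)$ and $0$ elsewhere. Then
\[
\rho\int_{t-\delta}^t\Psi(t,s)v\,ds\approx\rho\delta v
\]
for small $\delta$, and these points span $\R^d$ as $v$ ranges over the unit ball. Combined with convexity and the symmetry of Corollary~\ref{cor:symmetry}, this shows that $0$ is an interior point of $A_t$.

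The main argument is by contradiction. Suppose $A_t$ is not strictly convex for some $t$. Then there are distinct $x_1,x_2\in\partial A_t$ whose closed segment $[x_1,x_2]$ lies in $\partial A_t$.

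\begin{itemize}
\item Take the midpoint $m=\tfrac12(x_1+x_2)\in\partial A_t$. By the supporting hyperplane theorem for the compact convex set $A_t$, there is a unit vector $n$ with $\langle n,m\rangle=\max_{y\in A_t}\langle n,y\rangle=:h$. This $n$ is an outward normal in the Fréchet sense, hence lies in the Mordukhovich cone at $m$.
\item The linear function $\langle n,\cdot\rangle$ is at most $h$ at $x_1$ and at $x_2$, and equals $h$ at their midpoint. It therefore equals $h$ on the whole segment, so $x_1$ and $x_2$ both lie in the supporting hyperplane $H=\{\langle n,\cdot\rangle=h\}$.
\item Consequently $n$ is an outward unit normal vector to $A_t$ at both $x_1$ and $x_2$: for convex sets the normal cone at $p$ is $\{v:\langle v,y-p\rangle\le0\ \forall y\in A_t\}$, and $n$ satisfies this at $x_1$ and at $x_2$.
\end{itemize}

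This contradicts the previous Proposition, which forbids two distinct points of $\partial A_t$ from sharing an outward unit normal. Hence $A_t$ is strictly convex for every $t\in\R$.

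The main obstacle is the interpretation step. The previous Proposition phrases normality as ``$n\perp T_{\overline x}\partial A_\tau$'', whereas $\partial A_t$ is not yet known to be $C^1$ at this stage. I would argue that its proof uses only the fact that $n$ defines a supporting hyperplane at each point, which is exactly what the PMP characterisation
\[
\eta(t)\overline x(t)=\max_{x\in A_t}\eta(t)x
\]
exploits. The supporting-hyperplane normal produced above is therefore an admissible input to that Proposition.
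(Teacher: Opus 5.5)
Your proof is correct and follows the derivation the paper intends. The corollary is stated there without proof, as an immediate consequence of fibre-wise convexity, closedness, and the preceding Proposition that no two distinct points of $\partial A_t$ share an outward unit normal; your midpoint/supporting-hyperplane argument, together with the nonempty-interior check, supplies exactly the omitted convex-geometry step. Your reading of that Proposition in terms of supporting-hyperplane normals is also the right one, and it can be made self-contained: if $\overline x=\rho\int_{-\infty}^\tau\Psi(\tau,s)\xi(s)\,ds$ maximises $\langle n,\cdot\rangle$ on $A_\tau$, then $\langle n,\overline x\rangle\le\rho\int_{-\infty}^\tau|\Psi(\tau,s)^\top n|\,ds$. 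This bound is attained, and equality forces $\xi(s)=\Psi(\tau,s)^\top n/|\Psi(\tau,s)^\top n|$ for a.e.\ $s<\tau$, so the maximiser is unique.
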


\subsection{Boundary system for continuous nonautonomous linear systems}
Assume that  the linear homogeneous system \eqref{eq:linearNAODE} is exponentially stable, and denote by $\A$ the minimal invariant set of the set-valued system of \eqref{eq:linearNAODEwNOISE} provided by Theorem \ref{thm:exist-attractor}. Then, Theorem \ref{thm:BM-NAODE-inv}\textit{(i)} assures that the unit normal bundle of $\A$ is backward invariant with respect to the boundary system,
\begin{equation}\label{eq:linear-boundary-map}
\begin{cases}
    \dot x=L(t)x+\rho\,n,\\
    \dot n=-L(t)^\top n+\langle n,L(t)^\top n\rangle\, n.
\end{cases}
\end{equation}
To show the invariance of the outward unit normal bundle we can proceed in two ways: we can prove it directly using the fact that the Pontryagin Maximum Principle is both necessary and sufficient in the linear case, or we can prove the differentiability of the boundary of $\A$ and then apply  Theorem~\ref{thm:BM-NAODE-inv}\textit{(ii)}. We take the second route, convinced that the differentiability of the boundary is an interesting property by itself. Notably, it means that the solution of the linear Hamilton-Jacobi equation are classical solutions and not viscosity solutions, a property that should be widely known in the optimal control literature but that seems hard to find. We remark that the backward invariance with respect to the boundary system \eqref{eq:linear-boundary-map} is essential in the proof.

\begin{thm}[Differentiability of the attractor's boundary]\label{thm:linear-boundary-C1}
    Assume that~\eqref{eq:linearNAODE} is exponentially stable. Then, $\partial A_t$ is a $C^1$-differentiable manifold for every $t\in\R$.
\end{thm}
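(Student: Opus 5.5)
The plan is to reduce $C^1$-regularity of $\partial A_\tau$ to the uniqueness of the outward unit normal at every boundary point. Fix $\tau\in\R$. By Corollary~\ref{cor:convexity} and the closedness proposition, $A_\tau$ is a compact convex set. I will first check that it has nonempty interior. Its support function is
\[
h_\tau(n):=\max_{x\in A_\tau}\langle n,x\rangle=\rho\int_{-\infty}^\tau\big|\Psi(\tau,s)^\top n\big|\,ds .
\]
This formula comes from \eqref{eq:hyp_sols_lin} by choosing $\xi(s)=\Psi(\tau,s)^\top n/|\Psi(\tau,s)^\top n|$, which is well defined since $\Psi(\tau,s)$ is invertible. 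The integrand is continuous in $s$ and equals $1$ at $s=\tau$, so $h_\tau(n)>0$ for every $n\in\mathbb S^{d-1}$. By compactness of the sphere, $\min_n h_\tau(n)>0$, and with the symmetry of Corollary~\ref{cor:symmetry} this gives a ball around $0$ inside $A_\tau$. Hence $A_\tau$ is a convex body. For convex bodies, $\partial A_\tau$ is a $C^1$ hypersurface if and only if every boundary point has a unique supporting hyperplane, i.e.\ the (Mordukhovich $=$ convex) normal cone at each $p\in\partial A_\tau$ is a single unit vector. So it suffices to exclude two distinct outward normals at one point.

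Suppose $p\in\partial A_\tau$ admits outward normals $n_1\neq n_2$. Consistently with the backward invariance in Theorem~\ref{thm:BM-NAODE-inv}\textit{(i)}, I will integrate the boundary system \eqref{eq:linear-boundary-map} backwards from $(p,n_i)$. The $n$-component is
\[
n_i(s)=\frac{\Psi(\tau,s)^\top n_i}{\big|\Psi(\tau,s)^\top n_i\big|},
\]
the normalised adjoint. The corresponding controls $u_i(s)=n_i(s)$ are the maximisers in the Pontryagin condition. Hence
\[
x_i:=\rho\int_{-\infty}^\tau\Psi(\tau,s)u_i(s)\,ds
\]
realises $\max_{x\in A_\tau}\langle n_i,x\rangle=h_\tau(n_i)$. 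Because $A_\tau$ is strictly convex (Corollary~\ref{cor:linear-strictly-convex}), this maximiser is unique. Since $p$ itself is a maximiser for $n_i$, we get $x_1=x_2=p$, and therefore $\int_{-\infty}^\tau\Psi(\tau,s)\big(u_1(s)-u_2(s)\big)\,ds=0$.

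Next I will pair this identity with $n_1-n_2$. Writing $a(s)=\Psi(\tau,s)^\top n_1$ and $b(s)=\Psi(\tau,s)^\top n_2$, this gives
\[
0=\int_{-\infty}^\tau\Big\langle a(s)-b(s),\ \frac{a(s)}{|a(s)|}-\frac{b(s)}{|b(s)|}\Big\rangle\,ds .
\]
The elementary identity
\[
\Big\langle a-b,\tfrac{a}{|a|}-\tfrac{b}{|b|}\Big\rangle=(|a|+|b|)\big(1-\cos\angle(a,b)\big)\ge 0
\]
(monotonicity of the normalisation map) forces the integrand to vanish for a.e.\ $s$. Hence $a(s)$ and $b(s)$ are positively parallel for a.e.\ $s<\tau$. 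By continuity in $s$ and $\Psi(\tau,\tau)=I$, letting $s\to\tau^-$ gives $n_1=n_2$, a contradiction. Every boundary point therefore has a unique outward unit normal, and $\partial A_\tau$ is a $C^1$ manifold. Since $\tau$ was arbitrary, this holds for every $t\in\R$.

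The main obstacle is the identification of the backward boundary-system trajectory from $(p,n_i)$ with the support-point formula, i.e.\ that $p$ is exactly $\rho\int_{-\infty}^\tau\Psi(\tau,s)u_i(s)\,ds$. This needs the uniqueness of the maximiser (from strict convexity) together with the convergence of the improper integrals (from exponential stability). I would try the support-function characterisation first, since it bypasses delicate limiting-cone issues. The remaining steps, the monotonicity inequality and the limit $s\to\tau$, are routine.
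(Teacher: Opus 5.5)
Your proof is correct, but it reaches the contradiction by a different mechanism than the paper. Both proofs start the same way. Two distinct normals $n_1\neq n_2$ at $p$ give the two extremal controls $\Psi(\tau,s)^\top n_i/|\Psi(\tau,s)^\top n_i|$, and both steer to $p$.

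\emph{The paper} then averages these controls. The average $\tilde u=(\overline u_1+\overline u_2)/2$ still steers to $p\in\partial A_\tau$, yet $|\tilde u|<1$ wherever $\overline u_1\neq\overline u_2$. This contradicts the Pontryagin necessary condition that any control reaching a boundary point has unit norm a.e. Along the way the paper identifies $\overline x_i$ with backward trajectories of the boundary flow \eqref{eq:linear-boundary-map}.

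\emph{You} instead compute the support function $h_\tau$ explicitly. You then rule out two normals by pairing $\int_{-\infty}^\tau\Psi(\tau,s)(u_1-u_2)\,ds=0$ with $n_1-n_2$ and using the monotonicity of $a\mapsto a/|a|$.

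What each route buys:
\begin{itemize}
\item Your route is more self-contained. It does not use the maximum principle as a black box, only the equality case in $\langle a,\xi\rangle\le|a|$.
\item It also justifies what the paper's ``it suffices'' leaves implicit. You check that $A_\tau$ is a convex body, so that uniqueness of supporting hyperplanes is equivalent to $C^1$ regularity of $\partial A_\tau$.
\item The paper's route keeps the boundary system and its backward invariance at the centre, which is the thematic point of the section. In your argument that identification is only a consistency check.
\end{itemize}

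Three simplifications are available to you:
\begin{itemize}
\item The same equality case shows directly that any $\xi\in\U$ with $\rho\int_{-\infty}^\tau\langle\Psi(\tau,s)^\top n,\xi(s)\rangle\,ds=h_\tau(n)$ equals $\Psi(\tau,s)^\top n/|\Psi(\tau,s)^\top n|$ a.e.\ on $(-\infty,\tau]$. So the maximiser of $\langle n,\cdot\rangle$ on $A_\tau$ is unique without citing Corollary~\ref{cor:linear-strictly-convex}.
\item Positive parallelism of $\Psi(\tau,s)^\top n_1$ and $\Psi(\tau,s)^\top n_2$ at a single $s$ already forces $n_1=n_2$, because $\Psi(\tau,s)$ is invertible. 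The limit $s\to\tau^-$ is therefore unnecessary.
\item The bound $h_\tau\ge c>0$ by itself gives $\overline{B_c(0)}\subset A_\tau$, so the symmetry corollary is not needed.
\end{itemize}
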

\begin{proof}

  It suffices to show that for all $t\in\R$ and all $p\in \partial A_t$ there is associated a unique unit vector $n\in N_{1,p}^+\partial A_t$ which is therefore the normal to the boundary of the reachable set $A_t$ at $p$. Suppose on the contrary that there exist $\tau \in \mathbb{R}$ and $p\in\partial A_\tau$  such that there are two distinct normal vectors $n_1\ne n_2$ in $N_{1,p}^+\partial A_{\tau}$. Then, consider the initial value problems for the adjoint system 
    \begin{equation}\label{eq:07/01-17:36}
    \dot \eta =-\eta L(t),\qquad \eta(t_0)=n_i^\top, \qquad i=1,2.
 \end{equation}
    which yield the globally defined unique (row) solutions $\eta_i(t)=n_i\Psi(t,t_0)^{-1}$, for $i=1,2$. Note also that the uniqueness of solutions yields $\eta_1(t)\neq\eta_2(t)$ for all $t\in\R$. Recalling that Pontryagin Maximum Principle is sufficient and necessary for linear control systems such as~\eqref{eq:linearNAODEwNOISE} with a convex cost function such as $f_0$ as defined in~\eqref{eq:cost-function}, we have that two optimal controls $\overline u_i(t)$ exist satisfying
    \begin{equation}\label{eq:31/03-18:33}
    \eta_i(t)\overline u_i(t)=\max_{u\in \overline B_1(0)}\eta_i(t)u, \quad \text{for a.a. }t\in\R, \ i=1,2.
    \end{equation}
    In particular, $\overline u_1(t)\neq \overline u_2(t)$ for almost all $t\in\R$, and since $u_i(t)\in \overline B_1(0)$ for all $t\in\R$, then it must be that $|\overline u_i(t)|=1$ for almost all $t\in\R$. Let $\overline x_i(t)$ be the solutions to $\dot x =L(t)x+\rho\overline u_i(t)$ defined by
    \[
    \overline x_i(t)=\rho\int_{-\infty}^t\Psi(t,s)\overline u_i(s)\, ds,\quad t\in\R.
    \]
Note also that  $\overline x_i(t)$, for $i=1,2$, are the projections in $\mathbb{R}^d$ of the backward solutions of the boundary flow, that is the normalised Pontryagin Hamilton equations \eqref{eq:linear-boundary-map} with initial conditions $(p,n_i)$ for $i=1,2$ at time $\tau$, and thus $p=\overline x_1(\tau)=\overline x_2(\tau)$. On the other hand, notice that there is at least one $t_0<\tau$, and by continuity at least an open interval around $t_0$, such that $\overline x_1(t_0)\ne \overline x_2(t_0)$ for if not, we would have that
  \[0=\dot{x}_1-\dot{x}_2=L(t)(x_1(t)-x_2(t))+\rho (n_1(t)-n_2(t))=\rho (n_1(t)-n_2(t)),\]
  leading to $n_1(t)=n_2(t)$, and thus to $\overline u_1(t)=\overline u_2(t)$ for all $t< \tau$, which contradicts the uniqueness of solutions for the adjoint system \eqref{eq:07/01-17:36} due to \eqref{eq:31/03-18:33}.
  
  Since the corresponding solutions $\overline x_1(t_0)\ne \overline x_2(t_0) \in \partial A_{t_0}$ are distinct, it follows by linearity that the averaged curve $\tilde{x}(t)=\frac{\overline x_1(t)+\overline x_2(t)}{2}$ is also a solution of the system of equations:
  \[\dot{\tilde{x}}(t)=L(t)\tilde{x}+\tilde{u}(t)\]
  with respect now to the averaged controls $\tilde{u}(t)=\frac{\overline u_1(t)+\overline u_2(t)}{2}\in \overline{B_1(0)}$. But since the terminal point $\tilde{x}(\tau)=\frac{\overline x_1(\tau)+\overline x_2(\tau)}{2}=\frac{p+p}{2}=p\in \partial A_{\tau}$ is also reachable by the averaged solution at time $t=\tau$, it follows by the maximum principle again that the averaged control $\tilde{u}(t)$ should also be optimal, i.e. that $\tilde{u}(t)\in \partial \overline{B_1(0)}=\mathbb{S}^{d-1}$ for all $t<\tau$. But this is impossible since the inequality $u_1^*(t)\ne u_2^*(t)$ for a.a.~$t<\tau$, implies that $||\tilde{u}(t)||<1$. This contradicts the maximum principle for $\tilde x(\tau)\in \partial A_{\tau}$, and finishes the proof of the theorem. 
\end{proof}

\begin{cor}\label{thm:BM-NAODE-linear}
    Assume that~\eqref{eq:linearNAODE} is exponentially stable. The outward unit normal bundle $N^+_1\partial \A$ is invariant with respect to the boundary system \eqref{eq:linear-boundary-map}.
\end{cor}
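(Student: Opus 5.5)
The corollary follows by combining Theorem \ref{thm:BM-NAODE-inv}\textit{(ii)} with Theorem \ref{thm:linear-boundary-C1}, once the hypotheses of the former are checked in the linear setting. The plan is to verify, in order, that the data of \eqref{eq:linearNAODEwNOISE} meet the standing assumptions of Section \ref{sec:boundary-mapping-continuous}, that $\A$ is an invariant set in the required sense, and that its fibres have $C^1$ boundary. Part \textit{(ii)} then gives full invariance of $N^+_1\partial\A$ under the boundary system \eqref{eq:boundary-flow}.

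\emph{Standing assumptions.} Take $f(t,x)=L(t)x$ with $L$ locally integrable. The Carath\'eodory bounds hold on any compact $K\subset\R^d$ with $m(t)=l(t)=C_K\|L(t)\|$, and $f$ is $C^1$ in $x$ with $D_xf(t,x)=L(t)$. Solutions are global by the variation of constants formula. Substituting $D_xf=L$ into \eqref{eq:boundary-flow} gives exactly \eqref{eq:linear-boundary-map}.

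\emph{Invariance and compactness of $\A$.} Theorem \ref{thm:exist-attractor} gives $\Phi^{t,t_0}(A_{t_0})=A_t$ for all $t>t_0$ and uniform boundedness of $\A$. The closedness proposition proved above gives that each fibre $A_t$ is closed. Hence every $A_t$ is compact, so $\A$ is an invariant set in the sense of Section \ref{subsec:nonautonomous}.

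\emph{Regularity of $\partial\A$.} Theorem \ref{thm:linear-boundary-C1} states that each $\partial A_t$ is a $C^1$ manifold. Strict convexity (Corollary \ref{cor:linear-strictly-convex}) then ensures that at each $p\in\partial A_t$ the Mordukhovich outward cone reduces to the single classical outward unit normal. So $N^+_1\partial A_t$ is the graph of the Gauss map of $\partial A_t$, which matches the smooth-case definition \eqref{eq:normal-bundle} used in Theorem \ref{thm:BM-NAODE-inv}\textit{(ii)}.

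With these three points in place, Theorem \ref{thm:BM-NAODE-inv}\textit{(ii)} applies directly. The main obstacle is making sure the "almost all $\tau$" qualifier in the proofs of Proposition \ref{prop:backward-invariance} and Theorem \ref{thm:BM-NAODE-inv} does not weaken the conclusion to less than genuine invariance. I would remove it with a continuity argument. The boundary flow is continuous in $\tau$. The set $N^+_1\partial A_\tau$ is closed: the outward normal of a compact convex body with $C^1$ boundary depends continuously on the base point. The fibres $A_\tau$ vary continuously in $\tau$ in the Hausdorff metric. Together these show that the set of times at which the trajectory lies in $N^+_1\partial A_\tau$ is closed. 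A closed set of full measure is all of $\R$, so membership holds for every $\tau$.

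In the linear case there is also a direct alternative for the forward direction, which removes any reliance on Proposition \ref{prop:boundary-forward-invariant}. The boundary trajectory from $(p,n)$ at time $t_0$ uses the control $n(\tau)=\eta(\tau)^\top/|\eta(\tau)|$ with $\eta(\tau)=n^\top\Psi(\tau,t_0)^{-1}$. This control maximises $\eta(\tau)u$ over $u\in\overline B_1(0)$. Since the Pontryagin Maximum Principle is sufficient for linear systems, $x(\tau)$ maximises $\eta(\tau)x$ over $A_\tau$. Therefore $x(\tau)\in\partial A_\tau$, and $\eta(\tau)^\top/|\eta(\tau)|$ is its outward normal.
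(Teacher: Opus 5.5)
Your proposal is correct and follows the same route as the paper, which obtains the corollary by feeding the $C^1$ regularity of Theorem~\ref{thm:linear-boundary-C1} (together with the invariance and compactness of $\A$) into Theorem~\ref{thm:BM-NAODE-inv}\textit{(ii)}. Your direct Pontryagin argument is the alternative route the paper mentions but does not take, and your closed-set-of-full-measure step usefully removes the ``almost all $\tau$'' qualifier. One small correction: it is the $C^1$ regularity, not strict convexity, that makes the Mordukhovich outward cone at each boundary point a single vector; strict convexity instead makes the Gauss map injective.
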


An important consequence of Corollary \ref{thm:BM-NAODE-linear} is that \eqref{eq:linear-boundary-map} can be studied using a classic technique from nonautonomous dynamical systems theory called skew-product formalism. In particular, this approach allows us to show that the outward unit normal bundle is \emph{attractive} as we will clarify later.
\smallskip

In order to carry out the construction of the flow, we recall some notation and definitions. Given a function $g:\R\times \R^N\to\R^N$, with $N\in\N$, and some $\tau \in\R$, we call the time-translation of $g$ at time $\tau$, the function 
\[
g_\tau:\R\times \R^N\to\R^N,\qquad (t,y)\mapsto g_\tau(t,y)=g(t+\tau,y). 
\]
Moreover, the closure of the set of time-translations of $g$ with respect to a suitable topology, is called the hull of $g$, 
\[
\Omega_{g}=\mathrm{cls}\{g_t\mid  t\in\R\}.
\]
The employed topology plays a fundamental role in guaranteeing the continuity of the skew-product flow induced by the differential equation $\dot y =g(t,y)$, i.e.~the function defined by
\[
(\sigma,\varphi): \R\times \Omega_{g}\times \R^N\to\Omega_{g}\times \R^N,\qquad (t,\omega,y_0)\mapsto\big(\omega_t, \varphi(t,\omega,y_0)\big),
\]
where $\varphi(t,\omega,y_0)$ represents the solution of the initial value problem $\dot y= \omega(t,y)$, with $y(0)=y_0$. 
For Carath\'eodory differential equations like the ones considered in this work, optimal topologies to guarantee the continuity of the induced skew-product flow are presented in \cite{longo2018topologies,longo2017topologies,longo2019weak}.
\smallskip

In particular, \eqref{eq:linear-boundary-map} induces an (autonomous) skew-product flow on the augmented phase space $\Omega_{L}\times \R^{2d}$ via the map
\begin{equation}\label{skew-product-linear} 
\begin{split}
(\sigma,\varphi):\R^+\times\Omega_L\times\R^d\times \mathbb S^{d-1}\to \Omega_L\times\R^d\times \mathbb S^{d-1},\\
(t,\omega,x_0,n_0)\mapsto\big(\omega_t, \varphi(t,\omega,x_0,n_0)\big),
\end{split}
\end{equation}
where $\varphi(t,\omega,x_0,n_0)=\big(x(t,\omega,x_0,n_0),n(t,\omega,x_0,n_0)\big)$, denotes the solution of \eqref{eq:linear-boundary-map} with initial data $(x_0,n_0)$ at time zero.\smallskip

The above skew-product flow is also induced by the following more general functional system 
\begin{equation}\label{eq:linear-boundary-map-skew}
\begin{cases}
    \dot x=\mathscr{L}(\omega_t)x+\rho\,n,\\
    \dot n=-\mathscr{L}(\omega_t)^\top n+\langle n,\mathscr{L}(\omega_t)^\top n\rangle\, n,
\end{cases}
\end{equation}
where 
\[
\mathscr{L}:\Omega_L\to \R^{d\times d},\quad \omega\mapsto \mathscr{L}(\omega)= \omega(0).
\]
Note that, since $\mathscr{L}(\omega_t)= \omega_t(0)=\omega_t$, \eqref{eq:linear-boundary-map-skew} reduces to \eqref{eq:linear-boundary-map} when $\omega=L\in\Omega_L$. \smallskip

We also recall that a family $\widehat \A=\{ A_\omega\mid \omega\in \Omega_L\}$ of nonempty, compact sets of $\R^d\times \mathbb S^{d-1}$ is said to be a \emph{pullback attractor for the skew-product semiflow} \eqref{skew-product-linear}, if it is invariant, i.e.
\[ \varphi(t,\omega,A_\omega)=A_{\omega_t} \quad \text{ for each } t\ge 0 \;\text{ and }\; \omega\in \Omega_L\,,\]
and, for every nonempty bounded set $D$ of $\R^d\times \mathbb S^{d-1}$ and every $\omega\in \Omega_L$ one has
\begin{equation*}
\lim_{t\to\infty} \dist(\varphi(t,\omega_{-t},D), A_\omega)=0\,,
\end{equation*}
where $\dist(A,B)$ denotes the \emph{Hausdorff semi-distance} of two nonempty sets $A$, $B$ of~$\R^d\times \mathbb S^{d-1}$.  A pullback attractor for the skew-product flow is \emph{bounded} if
\begin{equation*}
\bigcup_{\omega\in \Omega_L} A_\omega\quad\text{is bounded.}
\end{equation*} 
Moreover, a compact set $\mathbb A$ of $\Omega_L\times\R^d\times \mathbb S^{d-1}$ is said to be a \emph{global attractor for the skew-product flow} \eqref{skew-product-linear}, if it is the maximal nonempty compact subset of $\Omega_L\times\R^d\times \mathbb S^{d-1}$ which is invariant, i.e.
\begin{equation*}
 (\sigma,\varphi)(t,\mathbb A)=\mathbb A \quad \text{ for each }\; t\ge 0\,,
\end{equation*}
and attracts all compact subsets $\mathcal D$ of $\Omega_L\times\R^d\times \mathbb S^{d-1}$, i.e.
\[\lim_{t\to\infty} \dist( (\sigma,\varphi)(t,\mathcal D),\mathbb A)=0\,,\]
where now $\dist(\mathcal B,\mathcal C)$ denotes the \emph{Hausdorff semi-distance} of two nonempty sets $\mathcal B$, $\mathcal C$ of $\Omega_L\times\R^d\times \mathbb S^{d-1}$.

We can now show that the outward unit normal bundle is the unique pullback attractor for the skew-product flow induced by the boundary system \eqref{eq:linear-boundary-map-skew}. Moreover, a global attractor for the skew-product flow exists is the base flow is compact and the outward unit normal bundle corresponds to the projection in phase-space of the global attractor.

\begin{thm}[Attractivity of the outward unit normal bundle]\label{thm:attractivity-UNB}
Assume that the linear homogeneous system $\dot x = L(t)x$ is exponentially stable, and that the skew-product flow
induced by \eqref{eq:linear-boundary-map} is continuous. Then, the following statements hold true.
\begin{itemize}
\item[(i)] For every $\omega\in\Omega_L$ there exists a uniform attractor $\A^\omega=(A^\omega_t)_{t\in\R}$ for
\begin{equation}\label{eq:linear_ev-funct}
    \dot x = \mathscr{L}(\omega_t)x+\rho \xi(t),\quad \text{for } \xi\in\U.
\end{equation}

\item[(ii)] A bounded set $B\subset \R^d$ exists such that 
\[
\bigcup_{\omega\in\Omega_L,\, t\in\R} A^\omega_t\subset B.
\]

\item[(iii)] Let $A_\omega=A^\omega_0$ for all $\omega\in\Omega_L$, and define $\widehat\A=\{A_\omega\mid\omega\in\Omega_L\}$ and $N^+_1\partial \widehat\A=(N^+_1\partial A_\omega)_{\omega\in\Omega_L}$. The outward unit normal bundle $N^+_1\partial \widehat\A$ is the unique pullback attractor for the skew-product flow~\eqref{skew-product-linear},  and for any $\omega\in\Omega_{L}$ we have 
    \begin{equation}\label{eq:thm-attract-i}
        N^+_1\partial A_\omega=\bigcap_{t\ge 0}\; \overline{\bigcup_{\tau\ge t}\varphi(\tau,\omega_{-\tau}, B\times  \mathbb S^{d-1})}.
        \end{equation}
\item[(iv)] If $\Omega_{L}$ is compact, there is a global attractor of the skew-product flow~\eqref{skew-product-linear} given by
    \[ \mathbb A=\bigcap_{t\ge 0}\; \overline{\bigcup_{\tau\ge t} (\sigma,\varphi)(\tau, \Omega_{L}\times B\times \mathbb S^{d-1})}\,
    =\bigcup_{\omega\in \Omega_{L}}\{ \omega\}\times N^+_1\partial A_\omega\,.\]
\end{itemize}
\end{thm}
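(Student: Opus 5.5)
The plan is to reduce everything to the fibrewise results already established for a single linear system (Theorem~\ref{thm:exist-attractor}, Theorem~\ref{thm:linear-boundary-C1}, Corollary~\ref{thm:BM-NAODE-linear}). I will combine them with an explicit pullback computation that exploits the linearity of the boundary system \eqref{eq:linear-boundary-map-skew}.

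\emph{Items (i) and (ii).} First I would check that every $\omega\in\Omega_L$ inherits exponential stability with the \emph{same} constants $K,\gamma$. For a translate $L_s$ this is immediate, since $\Psi_{L_s}(t,r)=\Psi_L(t+s,r+s)$. For a general $\omega$ in the hull I would use the assumed continuity of the skew-product flow. It gives $\Psi_{\omega}(t,r)=\lim_k\Psi_{L_{s_k}}(t,r)$ along a net of translates $L_{s_k}\to\omega$, so the bound $\|\Psi_\omega(t,r)\|\le Ke^{-\gamma(t-r)}$ passes to the limit. Theorem~\ref{thm:exist-attractor} then applies verbatim to \eqref{eq:linear_ev-funct} and yields $\A^\omega$. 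Its uniform-boundedness argument gives $A^\omega_t\subset B_r(0)$ with $r=1+\rho K/\gamma$ independent of $\omega$, so $B=B_r(0)$ works for (ii). Theorem~\ref{thm:linear-boundary-C1} and Corollary~\ref{thm:BM-NAODE-linear}, applied for each $\omega$, make $\partial A^\omega_t$ a $C^1$ strictly convex hypersurface. They also give invariance of $N^+_1\partial\A^\omega$ under \eqref{eq:linear-boundary-map-skew}. Together with the cocycle identity $A^{\omega_t}_0=A^\omega_t$, this is exactly the skew-product invariance $\varphi(t,\omega,N^+_1\partial A_\omega)=N^+_1\partial A_{\omega_t}$. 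Each fibre is compact because it is the graph of the Gauss map of a compact $C^1$ convex hypersurface.

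\emph{Item (iii), pullback attraction: the key step.} Take $(x_0,n_0)\in B\times\mathbb S^{d-1}$ and $\tau>0$, and write $(x(s),n(s))=\varphi(s+\tau,\omega_{-\tau},x_0,n_0)$ for $s\in[-\tau,0]$. The $n$-equation is the normalisation of the adjoint equation, so $n(s)=\eta(s)^\top/|\eta(s)|$ with $\eta(s)=\eta(0)\Psi_\omega(s,0)^{-1}$. This $\eta$ is defined for \emph{all} $s\in\R$, and I extend $n(s)$ to $s<-\tau$ by the same formula. Define
\[
p_\tau=\rho\int_{-\infty}^0\Psi_\omega(0,s)n(s)\,ds\in A_\omega .
\]
Since $n(s)$ maximises $\eta(s)u$ over $\overline B_1(0)$ for every $s\le0$, the same linear argument as in the closedness proof gives $\langle n(0),p_\tau\rangle=\max_{y\in A_\omega}\langle n(0),y\rangle$. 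Hence $(p_\tau,n(0))\in N^+_1\partial A_\omega$. By variation of constants,
\[
|x(0)-p_\tau|\le \|\Psi_\omega(0,-\tau)\|\,\bigl(|x_0|+|p_\tau(-\tau)|\bigr)\le Ke^{-\gamma\tau}(r+r),
\]
where $p_\tau(-\tau)\in A^\omega_{-\tau}\subset B_r(0)$ is the value at time $-\tau$ of the same integral solution. This bound is uniform in $(x_0,n_0)$. So $\dist(\varphi(\tau,\omega_{-\tau},B\times\mathbb S^{d-1}),N^+_1\partial A_\omega)\le 2rKe^{-\gamma\tau}\to0$, and any bounded $D$ is handled the same way by replacing $r$ with a bound on $D$. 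This is the step I expect to be the crux: the identification of the pulled-back $n$ with an exact supporting normal of $A_\omega$ is what makes the $n$-component converge with no separate estimate.

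\emph{Remaining parts of (iii), and item (iv).} The representation \eqref{eq:thm-attract-i} then follows from the standard characterisation of a compact invariant pullback-attracting family as the pullback $\omega$-limit set of an absorbing set. Here $B\times\mathbb S^{d-1}$ is absorbing, and invariance of $N^+_1\partial\widehat\A$ gives the reverse inclusion. Uniqueness comes from the same standard argument: any other bounded invariant pullback-attracting family is attracted by, and attracts, $N^+_1\partial\widehat\A$, and both are compact, so the two coincide. For (iv), if $\Omega_L$ is compact then $\Omega_L\times\overline B\times\mathbb S^{d-1}$ is a compact absorbing set for the continuous flow $(\sigma,\varphi)$. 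Its $\omega$-limit set $\mathbb A$ is therefore a nonempty compact invariant attracting set. The standard relation between pullback attractors and global attractors of skew-products with compact base (e.g.\ Kloeden--Rasmussen) identifies its $\omega$-fibres with $N^+_1\partial A_\omega$, giving $\mathbb A=\bigcup_\omega\{\omega\}\times N^+_1\partial A_\omega$.
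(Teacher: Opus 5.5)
Your proposal is correct and follows essentially the paper's route. You transfer the bound $\|\Psi_\omega(t,s)\|\le Ke^{-\gamma(t-s)}$ to every $\omega\in\Omega_L$ and apply Theorem~\ref{thm:exist-attractor} fibrewise for (i)--(ii). You obtain pullback attraction by comparing the pulled-back trajectory with the bounded solution $\rho\int_{-\infty}^{t}\Psi_\omega(t,s)n(s)\,ds$ forced by the normalised adjoint, whose difference contracts like $Ke^{-\gamma\tau}$, and you take (iv) from the standard compact-base results.

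The differences are organisational rather than substantive. You push arbitrary initial data forward and certify $(p_\tau,n(0))\in N^+_1\partial A_\omega$ directly through $\langle n(0),p_\tau\rangle=\max_{y\in A_\omega}\langle n(0),y\rangle$. The paper instead fixes a point of the normal bundle, traces its normal backwards, and uses backward invariance to identify the boundary trajectory with that bounded solution. You also prove the hull bound by a limit along translates rather than citing the dichotomy-propagation result. Finally, you spell out the two inclusions in \eqref{eq:thm-attract-i} and uniqueness among bounded invariant families, which the paper leaves implicit.
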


\begin{proof}
Firstly, note that for all $\omega\in\Omega_L$
\[
\frac d{dt}|n|^2= -2\langle n,\mathscr{L}(\omega_t)^\top n\rangle+2\langle n,\mathscr{L}(\omega_t)^\top n\rangle |n|^2
\]
and therefore  the unit sphere $\mathbb{S}^{d-1}$ (and thus also the unit ball $B_1(0)$) is invariant with respect to the nonlinear equations 
\begin{equation}\label{eq:linear-adjoint-skew}
\dot n=-\mathscr{L}(\omega_t)^\top n+\langle n,\mathscr{L}(\omega_t)^\top n\rangle\, n,\quad \text{for all }\omega\in\Omega_L.
\end{equation}
We can therefore focus on the equations 
\[
    \dot x = \mathscr{L}(\omega_t)x+\rho \xi(t),\quad \text{for }\omega\in\Omega_L\text{ and } \xi\in\U.
\]
By assumption, there are constants $K\ge 1$ and $\gamma>0$ such that the fundamental matrix solution of $\dot x = L(t)x$ satisfies $|\Psi(t,t_0)|\le Ke^{-\gamma (t-t_0)}$. Then, due to the continuity of the skew-product flow \eqref{skew-product-linear} we have that the fundamental matrix solution of $\dot x = \mathscr{L}(\omega_t)x$ for $\omega\in\Omega$ also satisfies $|\Psi_\omega(t,t_0)|\le Ke^{-\gamma (t-t_0)}$ for the same constants $K\ge 1$ and $\gamma>0$---see for example \cite[Proposition 4.3(iii)]{longo2019weak} for a proof of a more general result guaranteeing the propagation of an exponential dichotomy for Carath\'eodory linear equations.

Then, Theorem \ref{thm:exist-attractor} provides the existence of a uniform attractor $\mathcal A^\omega= (A^\omega_t)_{t\in\R}$---defined by \eqref{eq:hyp_sols_lin}---for every set-valued system $\dot x\in \overline B_\rho(\omega(t)x)$ with $\omega\in\Omega_L$ and completes the proof of (i). \par\smallskip

Note also that for any $t\in\R$,
\[
\sup_{\omega\in\Omega_L, \xi\in\U}\Big|\rho\int_{-\infty}^t\Psi_\omega(t,s)\xi(s)\,ds\Big|\le 
\frac{\rho K} \gamma.
\]
Hence, the set $B=B_{\frac{\rho K} \gamma +1}(0)\subset \R^d$ satisfies (ii).\par\smallskip

Let $A_\omega=A^\omega_0$ for all $\omega\in\Omega_L$, and define $\widehat\A=\{A_\omega\mid\omega\in\Omega_L\}$ and $N^+_1\partial \widehat\A=(N^+_1\partial A_\omega)_{\omega\in\Omega_L}$. 
For any $\omega\in\Omega_L$ and $x_0\in\partial A_{\omega}$, the Maximum Principle for \eqref{eq:linear_ev-funct}  with $\mathscr L$ evaluated at $\omega$ is satisfied by the solution $(\overline x (t),\overline n(t))$ of the boundary system \eqref{eq:linear-boundary-map-skew} with $\mathscr L$ evaluated at $\omega$ and initial data $(x_0, n_0)\in N^+_1\partial A_{\omega}$.

For any $\tau\in\R$ and $x_0\in\partial A_{\omega_{-\tau}}$, the Maximum Principle for \eqref{eq:linear_ev-funct} at $\omega_{-\tau}$ is satisfied by the solution $(\overline x (t),\overline n(t))$ of the boundary system \eqref{eq:linear-boundary-map-skew} at $\omega_{-\tau}$ with initial data $(x_0, n_0)\in N^+_1\partial A_{\omega_{-\tau}}$.
This fact has the following implications. Firstly,
\[
n(\tau,\omega_{-\tau}, \overline n(-\tau))=n_0,\quad\text{for all }\tau>0,
\]
and therefore $n(\tau,\omega_{-\tau}, \mathbb{S}^{d-1})=\mathbb{S}^{d-1}$. Secondly,
$$
\overline x(t)=\Psi_{\omega}(t,t_0)\overline x(t_0)+\rho\int_{t_0}^t \Psi_{\omega}(t,s)\overline n(s)\, ds\quad \text{for all }t_0<t,
$$
which implies 
\[
\begin{split}
|\overline x(t)-x_{\overline n}(t) |&\le |\Psi_{\omega}(t,t_0)\overline x(t_0)|+ \Big|\int_{-\infty}^{t_0}\Psi_{\omega}(t,s)\overline n(s)\, ds \Big|\\
&\le \left(\frac{\rho K} \gamma +1\right)Ke^{-\gamma (t-t_0)} +\Big|\int_{-\infty}^{t_0}\Psi_{\omega}(t,s)\overline n(s)\, ds \Big|,
\end{split}
\]
where the right hand-side of the previous inequality  is obtained using (i) and (ii), and it can be made as small as desired by taking $t_0$ as small as needed. Hence, $\overline x(t)=x_{\overline n}(t)$ for all $t\in\R$, that is, $\overline x$ coincides with the unique hyperbolic solution of \eqref{eq:linear_ev-funct} with $\mathscr L$ evaluated at $\omega$ and with $\xi=\overline n$. Then, we have that for any $p\in B$,
\[
\begin{split}
| x\big(\tau,\omega_{-\tau},\, &p,\overline n(-\tau)\big)- \overline x(0)|\\
&=| \Psi_{\omega_{-\tau}}(\tau,0)p +\int_{0}^{\tau}\Psi_{\omega_{-\tau}}(\tau,s)\overline n(s-\tau)\, ds -\int_{-\infty}^{0}\Psi_{\omega}(0,s)\overline n(s)\, ds|\\
&\le |p|Ke^{-\gamma \tau}+\Big| \int_{0}^{\tau}\Psi_{\omega}(\tau-\tau,s-\tau)\overline n(s-\tau)\, ds -\int_{-\infty}^{0}\Psi_{\omega}(0,s)\overline n(s)\, ds\Big|\\
&=|p|Ke^{-\gamma \tau}+\Big| \int_{-\tau}^{0}\Psi_{\omega}(0,u)\overline n(u)\, du -\int_{-\infty}^{0}\Psi_{\omega}(0,s)\overline n(s)\, ds\Big|\\
&=|p|Ke^{-\gamma \tau}+\Big| \int_{-\infty}^{-\tau}\Psi_{\omega}(0,s)\overline n(s)\, ds\Big|
\end{split}
\]
and the right hand-side of the previous chain of inequality can be made as small as desired choosing $\tau$ big enough. Then, the arbitrariness on $p\in B$ implies that 
\[
\lim_{\tau\to\infty}\dist\Big(x\big(\tau,\omega_{-\tau},B,\overline n(-\tau)\big), x_{\overline  n}(0)\Big)=0.
\]
Gathering the previous information we have exactly \eqref{eq:thm-attract-i} which completes the proof of (iii).\par\smallskip

Finally, the existence of a global attractor $\mathbb A$ when  $\Omega_L$ is compact, follows from~Theorem 2.2 of Cheban~\emph{et al.}~\cite{cheban2002relationship} and, as shown in Theorem 16.2 of~\cite{carvalho2012attractors}, $N^+_1\partial A_\omega$ is the section of $\mathbb A$ over $\omega$, that is ${\mathbb A}=\bigcup_{\omega\in \Omega_L}\{ \omega\}\times N^+_1\partial A_\omega$, which finishes the proof of (iv) and of the theorem.
\end{proof}

\begin{example}\label{example}
    We aim to show the applicability of the above results by mean of a concrete example. Consider the two-dimensional nonautonomous linear system 
    \[
    \dot x= L(t)x=\begin{pmatrix}
        -20+10\arctan(0.1t) &(1+\arctan(t))\cos(0.1t)\\(1-\arctan(t))\sin(t) &-15+5\cos(0.5t)
    \end{pmatrix}x
    \]
    It is easy to show that this linear system is exponentially stable because it is row-dominant (cf.~\cite{fink2006almost} for the notion of row-dominance). Hence, Theorem \ref{thm:exist-attractor} guarantees the existence of a uniformly bounded forward and pullback attractor for the set valued system $\dot x\in\overline B_1(L(t)x)$. Note that we have fixed the size of the noise to $\rho=1$ (check Corollary \ref{cor:scale} for the implications to different values of $\rho$). 
       \begin{figure}[h]
    \centering
    \begin{overpic}[width=\textwidth]{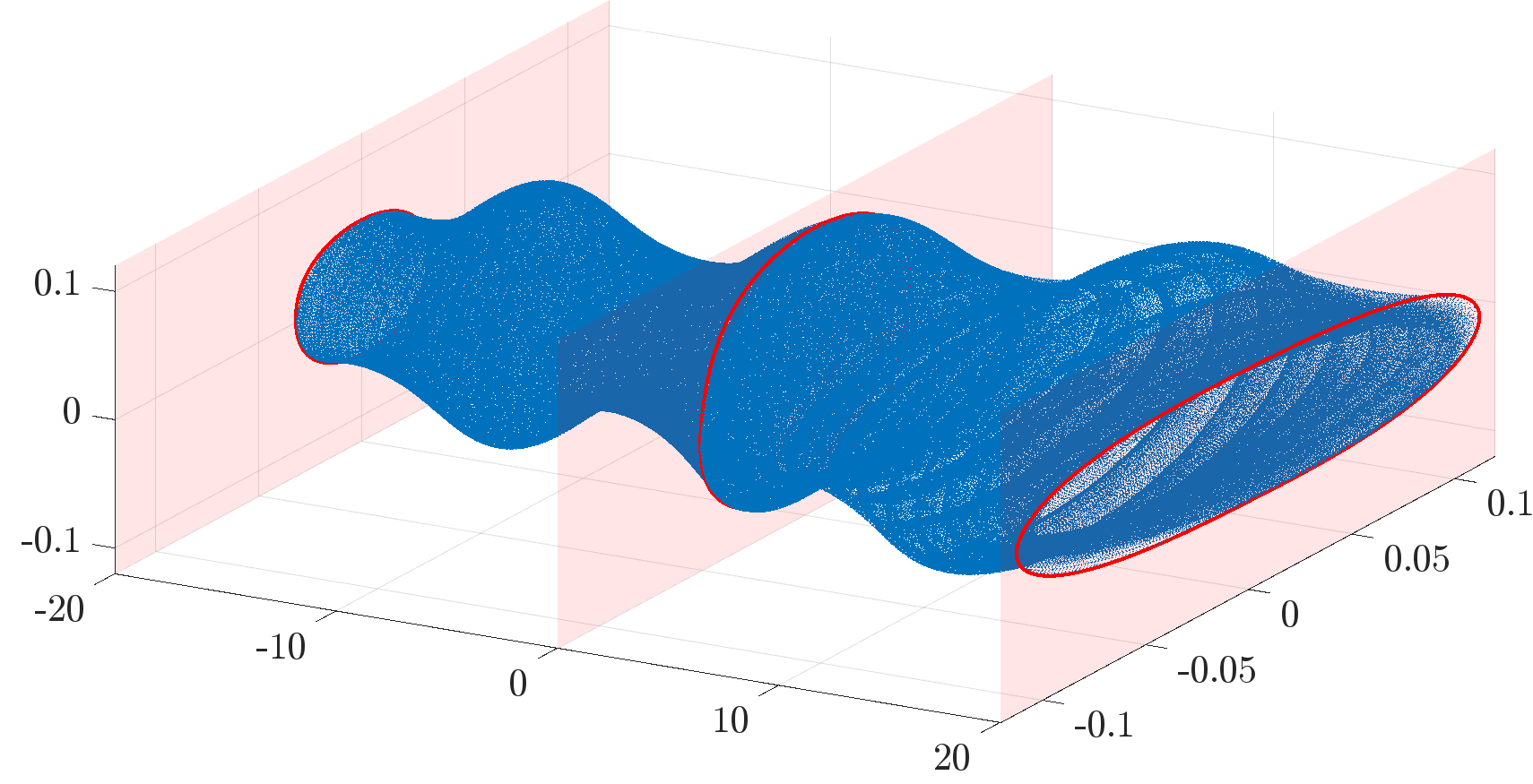}
    \put(32,2){$t$}
    \put(0,24){$x_2$}
    \put(85,7){$x_1$}
    \put(37,47){$A$}
    \put(65.5,41.7){$B$}
    \put(94.5,37){$C$}
    \end{overpic}
    \begin{overpic}[width=0.325\textwidth]{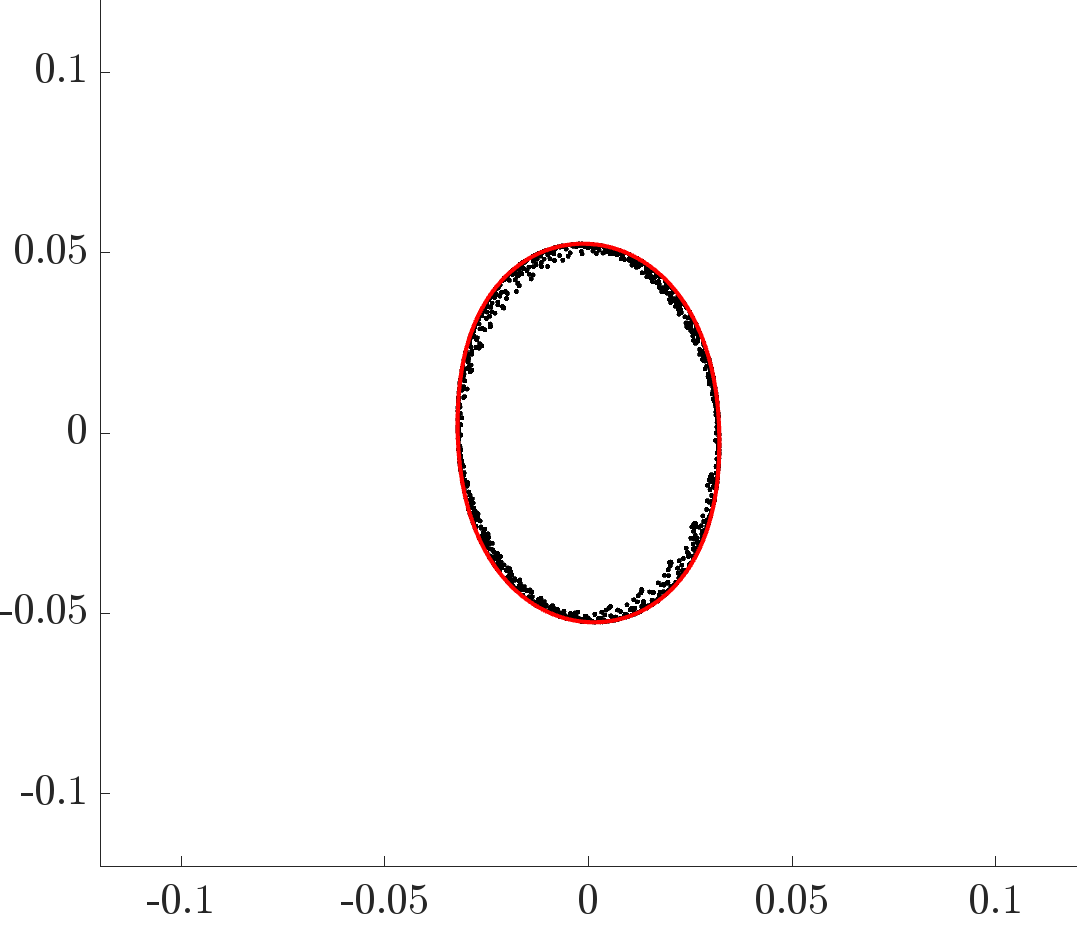}
    \put(90,75){$A$}
    \put(11,75){$x_2$}
    \put(90,8){$x_1$}
    \put(39,-6){$t=-20$}
    \end{overpic}
    \begin{overpic}[width=0.325\textwidth]{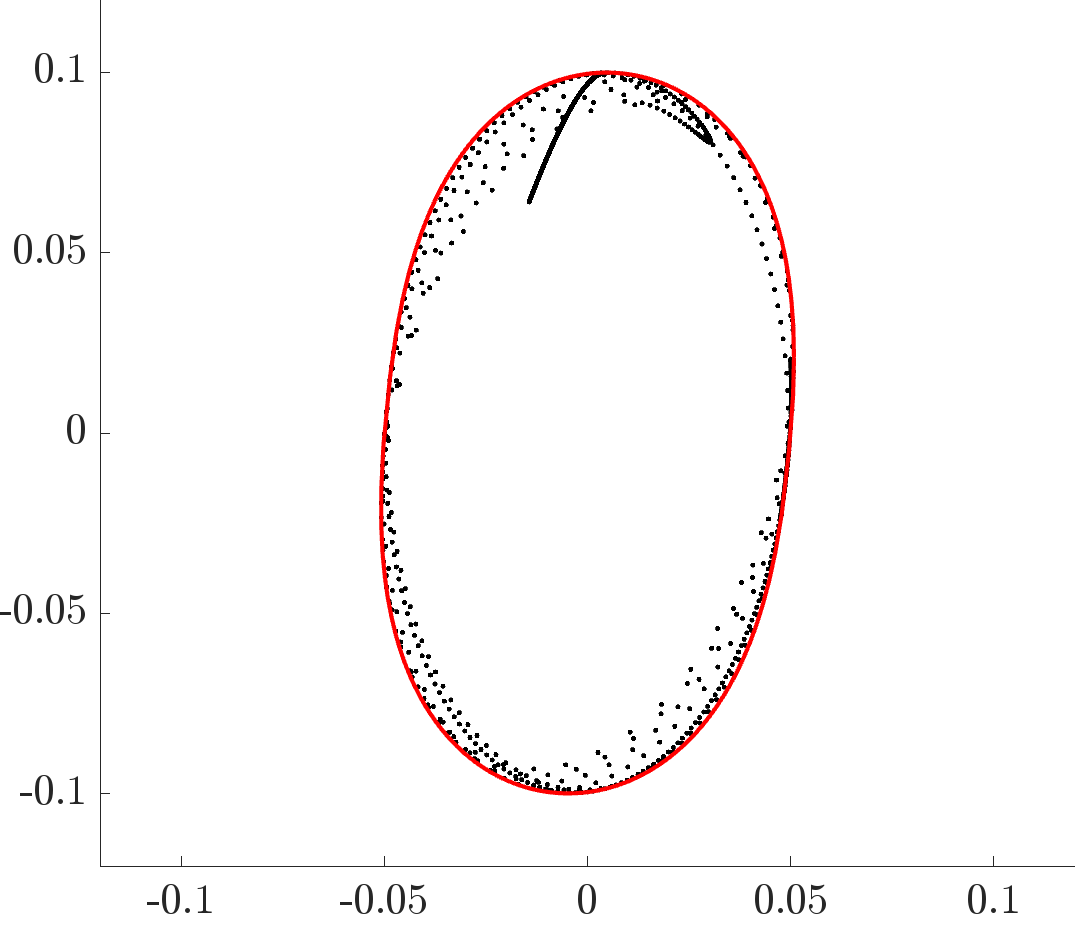}
    \put(90,75){$B$}
    \put(11,75){$x_2$}
    \put(90,8){$x_1$}
    \put(44,-6){$t=0$}
    \end{overpic}
    \begin{overpic}[width=0.325\textwidth]{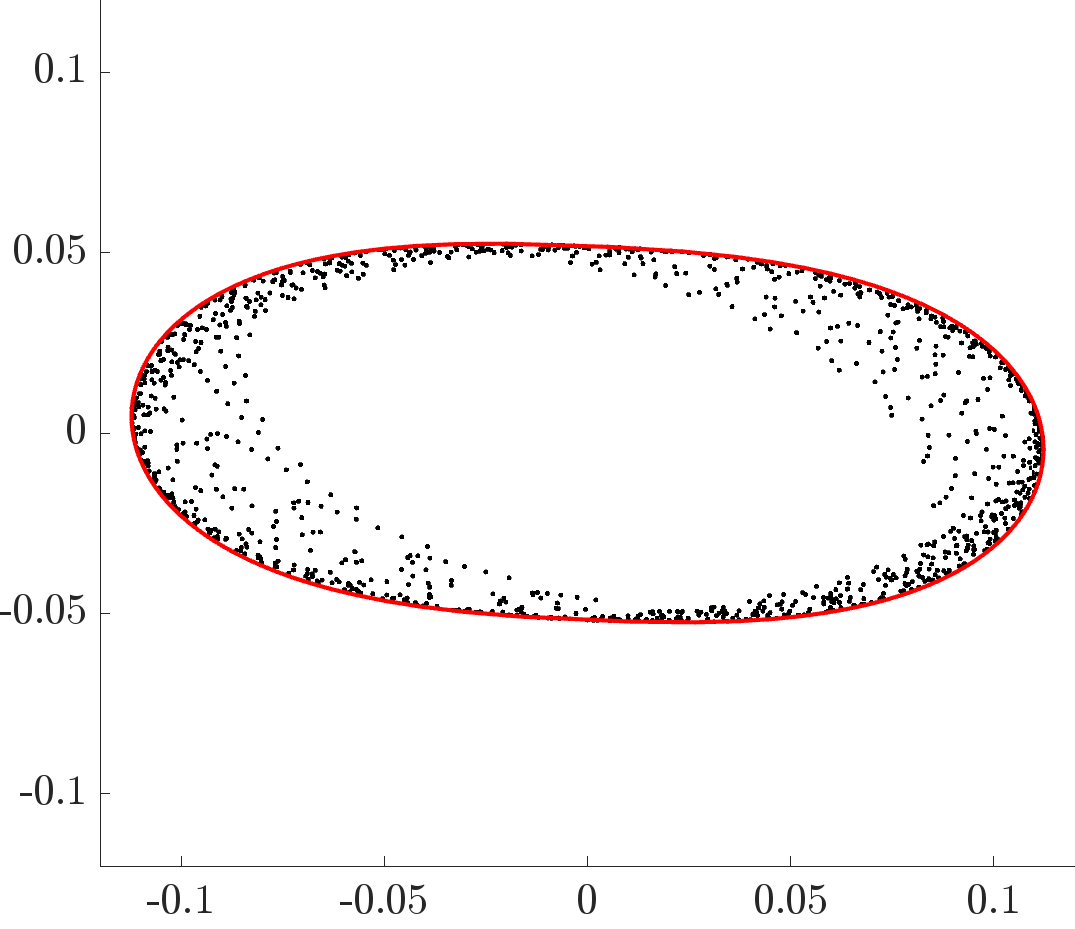}  
    \put(90,75){$C$}
    \put(11,75){$x_2$}
    \put(90,8){$x_1$}
    \put(43,-6){$t=20$}
    \end{overpic}
    \vspace{3mm}
    \caption{The upper panel shows an approximation of the boundary of the attractor for the set valued system defined in Example \ref{example} as a cloud of five-hundred trajectories started at the origin at $t_0=-100$ and different controls $u(t)$ with $|u(t)|=1$ for all $t\in\R$. A reduced number of controls and the constraint on their  norm were chosen to allow an intuitive representation of the volume and change of shape of the attractor in time.  The planes marked with the letters $A$ $B$ and $C$ section the attractor at times $t\in\{-20,0,20\}$. The red curves show the three sections of the boundary obtained via the boundary flow \eqref{eq:linear-boundary-map} projected to $R^2$.}
    \label{fig:attractor}
\end{figure} 
    The upper panel in Figure \ref{fig:attractor} shows an approximation of the boundary of the attractor in the interval $[-20,20]$ obtained via forward integration of five hundred trajectories starting at the origin at time $t_0=-100$ with respect to five-hundred different controls $u(t)$ with $|u(t)|=1$ for all $t\in\R$.  
    A reduced number of controls and the constraint on the norm of the controls have been chosen to provide an intuitive representation of the volume and change of shape of the attractor in time. 
    However, we know that the attractor is a (strictly) convex set which is symmetric with respect to the origin (see Corollaries \ref{cor:symmetry}, \ref{cor:convexity}, \ref{cor:linear-strictly-convex}). 
    It is not to be expected that all, or any, of the depicted five-hundred solutions lie on the boundary of the attractor as it is appreciable in the three lower panels which corresponds to the three sections of the three-dimensional plot at the times $t\in\{-20,0,20\}$.  
    The black dots are the points attained by the five hundred trajectories within each fibre. In each fibre, the boundary can be determined as the first component in the pullback limit in Theorem \ref{thm:attractivity-UNB}(iii). The red curves in Figure \ref{fig:attractor} show the three sections of the boundary. We use a special trick to make sure to obtain uniformly distributed points on the boundary. We firstly carry out a backward integration of \eqref{eq:linear-adjoint-skew} from $\tau\in\{-20,0,20\}$ back to $t_0=\tau-10$ of one-hundred-and-fifty initial conditions $\{n_i\mid i=1,\dots,150\}$ on the unit circle (corresponding to as many normal vectors), and then a joint forward integration until time $\tau$ of the solutions $(x_i(t),n_i(t))$ to \eqref{eq:linear-boundary-map} with initial conditions $\{(0,n(t_0,\tau,n_i))\mid i=1,\dots,150\}$ at time $t_0$. The red curve is made of the point $x_i(\tau)$ for $i=1,\dots,150$. Note also that $x_i(\tau)\in\partial A_\tau$ is exactly the point corresponding to the outward unit normal vector $n_i$.
\end{example}

\section{Conclusions and outlook}
This work provides a powerful framework to track the evolution of the boundary (in fact of the outward unit normal bundle) of a minimal invariant set for a nonautonomous differential inclusion induced by a differential equation with additive uniformly bounded noise, controls and/or other type of perturbations. The developed framework  allows to apply classic analytical and numerical techniques from dynamical systems theory to the deterministic single-valued differential system representing the evolution of the outward unit normal bundle. \par\smallskip

A natural question is whether the results in this work extend to the case where controls or perturbations take values in a convex set which is not necessarily the unit sphere or even in more general non-convex sets. While the fundamental connection with the Pontryagin Maximum Principle is still true, the construction of the boundary system must be carried out taking into account these changes. We believe that the characterisation of a broader class within which our results still hold true is an important problem for future work.    \par\smallskip

Our results also hold for autonomous differential equations, where we envision the stronger results hold thanks to the invariance with respect to time. \par\smallskip

We aim to push our accomplishments even further using this newly established approach to investigate ``topological and smooth bifurcations'' of minimal invariant sets in terms of the behaviour of their boundary. 
We have specifically worked in the nonautonomous context, driven by the societal relevance posed by tipping points (also known as critical transitions) under the time-dependent variation of parameters \cite{ashwin2012tipping}. Recent works \cite{duenas2023rate,duenas2023critical,duenas2024concave,duenas2024critical,kuehn2022estimating, longo2021rate, longo2024critical1, longo2024critical} have shown the undeniable relation between this type of tipping points and nonautonomous bifurcation theory~\cite{anagnostopoulou2023nonautonomous}. The boundary system in this work allows to rigorously investigate the impact of controls and uncertainties on tipping points in a generality never attempted before.\par\smallskip

\subsection*{Acknowledgments} The authors thank Gabriel Fuhrmann, Michal Fedorowicz, Wei Hao Tey, Jeroen Lamb, Satvik Sharma and Dima Turaev for insightful discussions which greatly helped the development of this paper and correction of its draft. Iacopo P.~Longo acknowledges partial support from UKRI under the grant agreement
EP/X027651/1, from  MICIIN/FEDER project PID2021-125446NB-I00 and from the University of
Valladolid under project PIP-TCESC-2020. The work of Konstantinos Kourliouros and Martin Rasmussen has been supported by the EPSRC grant EP/Y020669/1.  

\bibliographystyle{siam}
\addcontentsline{toc}{chapter}{Bibliography}
\bibliography{references}

\end{document}